\newtheorem{theorem}{\bf Theorem}
\newtheorem{corollary}[theorem]{\bf Corollary}
\newtheorem{lemma}[theorem]{\bf Lemma}
\newtheorem{proposition}[theorem]{\bf Proposition}
\newtheorem{remark}[theorem]{\bf Remark}
\numberwithin{equation}{section}
\numberwithin{theorem}{section}
\numberwithin{figure}{section}
\def\R{\mathbb{R}}
\def\R{\mathbb{R}}
\begin{document}
\renewcommand{\thefootnote}{}
\footnotetext{Research partially supported by Ministerio de Econom\'ia y Competitividad Grant No: MTM2016-80313-P,  Junta de Andaluc\'ia Grant No. A-FQM-139-UGR18 and the `Maria de Maeztu'' Excellence
Unit IMAG, reference CEX2020-001105-M, funded by
MCIN/AEI/10.13039/501100011033} 

\title{A Weierstrass type representation for translating solitons and singular minimal surfaces} 
\author{Antonio Mart\'inez and A.L. Mart\'inez-Trivi\~no}
\vspace{.1in}
\date{}
\maketitle
{
\noindent $^1$Departament of Geometry and Topology, University of Granada, E-18071 Granada, Spain\\  \\
e-mails: amartine@ugr.es, aluismartinez@ugr.es}
\begin{abstract}
In this paper we provide a Weierstrass representation formula for translating solitons and singular minimal surfaces in ${\mathbb{R}^3}$. As application we study when the euclidean Gauss map has a harmonic argument and solve a general Cauchy problem in this class of surfaces.
\end{abstract}
\vspace{0.2 cm}

\noindent 2010 {\it  Mathematics Subject Classification}: {53C42, 35J60  }

\noindent {\it Keywords: Weierstrass representation, translating solitons, singular minimal surface, Cauchy problem} 
\everymath={ }

\section{Introduction.}
The theory  of critical points for the weighted  area functional \begin{equation}
{\cal A}_\varphi (\Sigma) = \int _\Sigma\mathrm{e}^\varphi d\Sigma, \label{fe}
\end{equation} 
of surfaces  $\Sigma$ in a domain $\Omega_\varphi\subseteq \mathbb{R}^3$ when 
 $\varphi$ represents the restriction on $\Sigma$ of a function depending only on the last coordinate $z$ of $\Omega$, is an area of mathematical research situated at the crossroad of several disciplines: geometric analysis, partial differential equations, mathematical physics and architecture  among others. The Euler-Lagrange   equation of \eqref{fe} is given in terms of the mean curvature vector  ${\text {\bf H}}$ of $\Sigma$ as follows: 
\begin{align} 
&{\text {\bf H}} = \left(\overline{\nabla} \varphi \right)^\perp \label{fminimal}
\end{align}
where $ \overline{\nabla} $ is  the gradient operator in $\R^3$ and  $\perp$ denotes the projection to the normal bundle of  $\Sigma$. 

The equation \eqref{fminimal} also means that $\Sigma$ is a minimal surface in  the Ilmanen space $(\Omega_\varphi,{\rm I_\varphi})$,  \cite{Il}, that is, in $\Omega_\varphi$ endowed with the conformally changed metric 
\begin{align*} 
{\rm I}_\varphi:=  \mathrm{e}^\varphi\ \langle \cdot , \cdot \rangle.
\end{align*}

\begin{itemize}
\item  When $\varphi$ is just the height function, $\varphi(z)=z$, $\Sigma$ is a translating soliton, that is, a surface such that $$ t \rightarrow \Sigma + t \vec{e}_3 $$
is a mean curvature flow, i.e. the normal component of the  velocity at  each point is equal to the mean curvature at that point. Recent advances in the understanding of their local and global geometry can be found in \cite{CSS, HMW, HIMW, HIMW2,MSHS1, MSHS2, SX, W}
\item When $\varphi(z)=\log(z)$,  $z>0$, $\Sigma$ describes the shape of a ``hanging roof'', i.e. a heavy surface in a gravitational field that, according to the architect F. Oho \cite[p. 290]{O} are of importance for the construction of perfect domes.
\item When $\varphi(z)=-2\log(z)$,  $z>0$, $\Sigma$ is a minimal surface in the half space model of the hyperbolic  space.
\end{itemize}
In the study of minimal surfaces in the Euclidean space, the classical Weierstrass-Enneper representation formula has been proved to be an extremely useful tool. The purpose of  this paper is to  provide a Weierstrass type representation formula for the class ${\cal S}_k$ of either translating solitons ($k=1$) or singular minimal surfaces ($k\neq 1$), that is, for minimal surfaces in $\mathbb{R}^3$ respect to the weighted area functional ${\cal A}_{\varphi_k}$ when  $\varphi_k$ is given by 

\begin{align*}
& \varphi_k(z) =z, \quad z\in \mathbb{R}, \quad \text{ if $k=1$},\\
&  \varphi_k(z) =\frac{2}{k-1} \log (z)  \quad z>0, \quad \text{ if $k\neq1$ }
\end{align*}

We refer to \cite{BHT,D, Du, DH, Rafa1, Rafa2, Rafa3, Rafa4, MM, N} for some  progress made in the family of singular minimal surfaces.
In Section 2, we consider  a new model of the Ilmanen space  obtained from $\Omega_{\varphi_k}$ under a change of the metric ${\rm I}_{\varphi_k}$, we give the integrability conditions  of a minimal surface in this new model and  obtain the equation satisfied by the Gauss map of any  surface  in  ${\cal S}_k$. Section 3 is devoted  to derive a representation formula for this kind of surfaces. In Section 4, we use our representation first to characterize, in terms of the Gauss map,  all examples that are  invariant  under either horizontal translations or vertical rotations and second to solve the following  general Cauchy  problem for surfaces in the class ${\cal S}_k$:
\begin{quote}
Let $\beta = (\beta_1,\beta_2,\beta_3):I\rightarrow \mathbb{R}^3$ be a regular analytic  curve and let $V:I\rightarrow \mathbb{S}^2$ be an analitic vector field along $\beta$ such that $\langle\beta',V\rangle=0$, $| \Pi\circ V|<1$ and $\beta_3>0$ if $k\neq1$, where $\Pi$ denotes the stereographic projection from the south pole. Find all surfaces in ${\cal S}_k$ containing $\beta$ with unit normal in $\mathbb{R}^3$ along $\beta$ given by $V$.
\end{quote} 
This problem has been inspired by the classical Bj\"{o}rling problem for minimal surfaces in $\R^3$, proposed by E.G. Bj\"{o}rling in 1844  and solved by H.A. Schwarz in 1890. 

Finally, we apply the solution to this Cauchy problem to study the geometry of surfaces in the class ${\cal S}_k$.

\section{The normal Gauss map equation}
For any $k\in \mathbb{R}$ we consider $\mathbb{R}^3_k$ the upper half space $\mathbb{R}^3_+$ if  $k\neq 0$ and  $\mathbb{R}^3_0=\mathbb{R}^3$. If $\{x,y,w\}$ is a coordinate system of $\mathbb{R}^3_k$ we will  take on $\mathbb{R}^3_k$  the following Riemannian metric $g_k$,
$$ g_k:= \mathrm{e}^{\eta_k(w)}( dx^2 + dy^2) + dw^2 ,$$ where \begin{align}
&\eta_k(w) = \frac{2}{k} \log (w)- \log (4),  \quad \text{if $k\neq0$},\label{eta1}\\
& \eta_k(w) = - 2\, w, \quad \text{if $k=0$}. \label{eta2}
\end{align}
It is clear that $(\mathbb{R}^3_k,g_k)$ is isometric to the Ilmanen space $(\Omega_{\varphi_k},I_{\varphi_k})$ by the following coordinate transformation $\Gamma: \mathbb{R}^3_k \rightarrow \Omega_{\varphi_k}$,
\begin{equation}\label{gamma}
\Gamma(x,y,w) = (x,y,z), \quad dz= \mathrm{e}^{-\frac{1}{2}\eta_k(w)} dw =  \mathrm{e}^{-\frac{1}{2}\varphi_k(z)} dw.
\end{equation}

By a straightforward computation, in the system of coordinates $\{x,y,w\}$, the Christoffel's symbols of $(\mathbb{R}^3_k, g_k)$ satisfy 
\begin{equation}
\label{Symbols}
\Gamma_{13}^{1}=\Gamma_{31}^{1}=\Gamma_{23}^{2}=\Gamma_{32}^{2}=\frac{1}{2}\dot{\eta}_k \ \ \text{and} \ \ \Gamma_{22}^{3}=\Gamma_{11}^{3}=-\frac{1}{2}\dot{\eta}_k\mathrm{e}^{\eta_k}, 
\end{equation}
where $(\ \dot{ }\ )$ stands the derivate with respect to $w$ and the rest of Christoffel's symbols  vanish everywhere.  

\

Let $\psi:\Sigma\rightarrow(\mathbb{R}^{3}_k,g_{k})$ be a minimal immersion, consider a local conformal parameter $\zeta=u+iv$ of $\Sigma$ on an open simply connected domain ${\cal U} \subset \mathbb{C}$ such that the induced metric $ds^2_k$  writes
\begin{equation} ds^2_k := \lambda^2 (du^2 +dv^2) = \lambda^2 |d\zeta|^2\label{imetric}
\end{equation} and set, as usual, the  Wirtinger's operators  by, 
$$
\partial_{\zeta}=\frac{1}{2}\left(\partial_{u}-i\partial_{v}\right), \quad \partial_{\overline{\zeta}}=\frac{1}{2}\left(\partial_{u}+i\partial_{v}\right).
$$
It is well known that $\psi$ is minimal if and only if $\psi$ is a harmonic map, that is, the tension field $T(\psi)={\rm trace} \nabla d\psi$ vanishes identically on $\Sigma$. But from \eqref{Symbols} and \eqref{imetric},
\begin{align*}T(\psi)& = 
2\lambda^{-2} \left( 2x_{\zeta\overline{\zeta}}+\dot{\eta}_k(w_{\zeta}x_{\overline{\zeta}}+w_{\overline{\zeta}}x_{\zeta})\right., \\
&\left.2y_{\zeta\overline{\zeta}}+\dot{\eta}_k(w_{\zeta}y_{\overline{\zeta}}+w_{\overline{\zeta}}y_{\zeta}),
2w_{\zeta\overline{\zeta}}-\dot{\eta}_k\mathrm{e}^{\eta_k}(x_{\zeta}x_{\overline{\zeta}}+y_{\zeta}y_{\overline{\zeta}})\right).
\end{align*}

To sum up, we have
\begin{proposition} 
\label{sistema1}
The following statements are equivalent
\begin{itemize}
\item $\Gamma\circ \psi$ is a minimal surface in $(\Omega_{\varphi_k},{\rm I}_{\varphi_k})$,
\item $\psi$ is a harmonic map,
\item $\psi=(x,y,w)$ satisfies,
 \begin{align}
&2x_{\zeta\overline{\zeta}}+\dot{\eta}_k(w_{\zeta}x_{\overline{\zeta}}+w_{\overline{\zeta}}x_{\zeta})=0, \nonumber\\
&2y_{\zeta\overline{\zeta}}+\dot{\eta}_k(w_{\zeta}y_{\overline{\zeta}}+w_{\overline{\zeta}}y_{\zeta})=0, \label{harmonic}\\
&2w_{\zeta\overline{\zeta}}-\dot{\eta}_k\mathrm{e}^{\eta_k}(x_{\zeta}x_{\overline{\zeta}}+y_{\zeta}y_{\overline{\zeta}})=0.\nonumber
\end{align}
\end{itemize}
\end{proposition}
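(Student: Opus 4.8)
The plan is to prove Proposition~\ref{sistema1} by establishing the chain of equivalences through two links: first, that $\Gamma\circ\psi$ is minimal in $(\Omega_{\varphi_k},{\rm I}_{\varphi_k})$ if and only if $\psi$ is a harmonic map into $(\mathbb{R}^3_k,g_k)$; second, that the harmonicity of $\psi$ is equivalent to the system \eqref{harmonic}. The first link is essentially formal: since $\Gamma:(\mathbb{R}^3_k,g_k)\to(\Omega_{\varphi_k},{\rm I}_{\varphi_k})$ is an isometry by construction (the defining relation $dz=\mathrm{e}^{-\frac12\eta_k(w)}dw$ in \eqref{gamma} was chosen precisely so that $g_k$ pulls back ${\rm I}_{\varphi_k}$), composing an immersion with an isometry does not change whether it is minimal, and likewise harmonicity of a map is invariant under post-composition with an isometry. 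So I would simply cite this and move on.

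For the second link I would recall the standard fact that a conformal immersion $\psi$ from a Riemann surface is minimal precisely when it is harmonic, i.e.\ when its tension field $T(\psi)=\mathrm{trace}\,\nabla d\psi$ vanishes; since we are already working with a conformal parameter $\zeta=u+iv$ as in \eqref{imetric}, this identification is immediate. The remaining task is the computation of $T(\psi)$ in coordinates. Using the general formula $T(\psi)^\gamma = \lambda^{-2}\bigl(4\,\mathrm{Re}\,(\psi^\gamma_{\zeta\bar\zeta}) + (\Gamma^\gamma_{\alpha\beta}\circ\psi)(4\,\mathrm{Re}\,(\psi^\alpha_\zeta\psi^\beta_{\bar\zeta}))\bigr)$ and feeding in the Christoffel symbols \eqref{Symbols}, the only nonvanishing symbols being $\Gamma^1_{13}=\Gamma^2_{23}=\frac12\dot\eta_k$ and $\Gamma^3_{11}=\Gamma^3_{22}=-\frac12\dot\eta_k\mathrm{e}^{\eta_k}$, one reads off each component of the tension field. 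The $x$-component picks up $2\Gamma^1_{13}\,\mathrm{Re}(x_\zeta w_{\bar\zeta}) = \dot\eta_k\,\mathrm{Re}(x_\zeta w_{\bar\zeta})$, which (after absorbing the factor of $2$ from the symmetrized product and noting $w_\zeta x_{\bar\zeta}+w_{\bar\zeta}x_\zeta$ is real) yields the first equation of \eqref{harmonic}; the $y$-component is identical; the $w$-component picks up $\Gamma^3_{11}(x_\zeta^2\text{-type terms})+\Gamma^3_{22}(y\text{-terms})$ giving $-\dot\eta_k\mathrm{e}^{\eta_k}(x_\zeta x_{\bar\zeta}+y_\zeta y_{\bar\zeta})$, which is the third equation. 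Since $\lambda^{-2}\neq0$, $T(\psi)=0$ is equivalent to the system, completing the proof.

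The only place requiring genuine care, rather than bookkeeping, is getting the combinatorial factors right in the tension-field formula: one must track the factor $2$ coming from the symmetry of the Christoffel symbols against the factor arising from converting $\partial_u,\partial_v$-derivatives into Wirtinger derivatives, and verify that the mixed products $w_\zeta x_{\bar\zeta}+w_{\bar\zeta}x_\zeta$ and $x_\zeta x_{\bar\zeta}+y_\zeta y_{\bar\zeta}$ appearing are exactly the real quantities that show up in $4\,\mathrm{Re}(\psi^\alpha_\zeta\psi^\beta_{\bar\zeta})$. This is exactly the displayed expression for $T(\psi)$ already written just above the statement, so in effect the proof amounts to: invoke isometry-invariance for the first equivalence, invoke the minimal $\Leftrightarrow$ harmonic characterization for conformal immersions for the second, and point to the preceding coordinate computation of $T(\psi)$ together with $\lambda^2\neq0$ for the third. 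I expect no real obstacle beyond this routine verification.
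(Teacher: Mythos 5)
Your proposal is correct and follows essentially the same route as the paper: the isometry $\Gamma$ from \eqref{gamma} handles the first equivalence, the standard minimal--harmonic characterization for conformal immersions handles the second, and the coordinate computation of the tension field $T(\psi)$ with the Christoffel symbols \eqref{Symbols} (exactly the display preceding the proposition) yields \eqref{harmonic} since $\lambda^{-2}\neq 0$. Your factor bookkeeping in the Wirtinger-derivative form of the tension field checks out against the paper's formula.
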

Now, if we consider   $f= \mathrm{e}^{\frac{1}{2}\eta_k}x_{\zeta}$, $g= \mathrm{e}^{\frac{1}{2}\eta_k}y_{\zeta}$ and $h=w_{\zeta}$, the conformality conditions write as follows 
\begin{align}
& \lambda^2 = 2\left(\vert f\vert^{2}+\vert g\vert^{2}+\vert h\vert^{2}\right)\label{con1}\\
&f^{2}+g^{2} + h^{2}=0.\label{con2}
\end{align}
and from \eqref{harmonic} we have that

\begin{equation}
\label{sistema2}
h_{\overline{\zeta}}=\frac{1}{2}\dot{\eta}_k\left(\vert f\vert^{2} +\vert g\vert^{2}\right), \quad f_{\overline{\zeta}}=-\frac{1}{2}\dot{\eta}_k\overline{f}h,  \quad
g_{\overline{\zeta}}=-\frac{1}{2}\dot{\eta}_k\overline{g}h
\end{equation}

Now, if we introduce the complex functions 
\begin{equation}
\label{complexfunctions}
F=f-i\ g \quad \text{ and } \quad G=\frac{h}{F}, 
\end{equation}
from \eqref{con1} and \eqref{con2}, we have that  $G$ is a smooth map into the Riemann sphere and if $G$ is not constant,
\begin{equation}
\label{coordiso}
h=FG \ \ , \ \ f=\frac{1}{2}F(1-G^{2}) \ \ , \ \ g=\frac{i}{2}F(1+G^{2}).
\end{equation}
Moreover, the Gauss map $N$ of $\Gamma\circ\psi$ in the Euclidean space $\mathbb{R}^3$ is given in term of $G$ as
\begin{align*}
 N:= \left(\frac{2G}{1+\vert G\vert^{2}}, \frac{1-\vert G\vert^{2}}{1+\vert G\vert^{2}} \right).
\end{align*}
Hence, $G=\Pi\circ N$, where $\Pi$ is the stereographic projection from the point $(0,0,-1)$. We are going to say that $G$ is the \textit{euclidean Gauss map} of  $\Gamma\circ\psi$ and \textit{the normal Gauss map} of $\psi$.  

\

From the equations \eqref{sistema2} and by using \eqref{con1} and \eqref{con2}, we obtain that $\psi$ is an harmonic map if and only if
\begin{align}
\label{sistema3}
&2F_{\overline{\zeta}}=\dot{\eta}_k\, \vert F\vert^{2}\vert G\vert^{2}\overline{G},\nonumber \\
&4G_{\overline{\zeta}}=\dot{\eta}_k\, \overline{F}(1-\vert G\vert^{4})\\
& w_\zeta=FG\nonumber
\end{align}

\begin{proposition}
The normal Gauss map $G$ of $\psi$ satisfies the following complex equation 
\begin{align}
\label{equ}
&G_{\zeta\overline{\zeta}}+2\frac{\vert G\vert^{2}}{1-\vert G\vert^{4}}\overline{G}G_{\zeta}G_{\overline{\zeta}}+2k \frac{\vert G_{\overline{\zeta}}\vert^{2}}{1-\vert G\vert^{4}}G=0.
\end{align}
and
$$ \frac{G_{\overline{\zeta}}}{{1-\vert G\vert^{4}}}, \ \  \frac{\overline{G}G_{\overline{\zeta}}}{{1-\vert G\vert^{4}}}, \ \ \frac{\overline{G}^2G_{\overline{\zeta}}}{{1-\vert G\vert^{4}}}$$
are smooth functions on ${\Sigma}$
\end{proposition}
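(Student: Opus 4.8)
The plan is to differentiate the relations in \eqref{sistema3} and eliminate $F$; the role of the parameter $k$ will be played entirely by the elementary identity $\ddot\eta_k=-\tfrac{k}{2}\dot\eta_k^{\,2}$, valid for every $k\in\mathbb R$ (for $k\neq0$ one has $\dot\eta_k=2/(kw)$ and $\ddot\eta_k=-2/(kw^2)$; for $k=0$ both sides vanish), so I would record this identity first.

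For the smoothness assertion I would simply read it off the second equation of \eqref{sistema3}: since $4G_{\overline\zeta}=\dot\eta_k\,\overline F\,(1-|G|^4)$, on $\{|G|\neq1\}$ one has $G_{\overline\zeta}/(1-|G|^4)=\tfrac14\dot\eta_k\overline F$, and the right-hand side is smooth on all of $\Sigma$ because $F=\mathrm e^{\eta_k/2}(x_\zeta-iy_\zeta)$ is smooth and $\dot\eta_k$ is a smooth function of $w$ (with $w>0$ when $k\neq0$); hence the quotient extends smoothly across $\{|G|=1\}$, and multiplying by the smooth functions $\overline G$ and $\overline G^{\,2}$ gives the remaining two. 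This also makes sense of \eqref{equ} itself: although it is formally singular where $|G|=1$, each coefficient there is one of these smooth quotients times a bounded factor, so \eqref{equ} is an identity between smooth functions.

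To obtain \eqref{equ}, I would set $A:=\dot\eta_k\overline F=4G_{\overline\zeta}/(1-|G|^4)$ and differentiate $4G_{\overline\zeta}=A(1-|G|^4)$ with respect to $\zeta$ (using $G_{\zeta\overline\zeta}=G_{\overline\zeta\zeta}$), obtaining $4G_{\zeta\overline\zeta}=A_\zeta(1-|G|^4)-A(|G|^4)_\zeta$. Next I would expand $A_\zeta=\ddot\eta_k\,w_\zeta\overline F+\dot\eta_k(\overline F)_\zeta$, feeding in $w_\zeta\overline F=|F|^2G$ from the third equation of \eqref{sistema3} and $(\overline F)_\zeta=\overline{F_{\overline\zeta}}=\tfrac12\dot\eta_k|F|^2|G|^2G$ from the conjugate of the first equation ($\dot\eta_k$ being real); combined with $\ddot\eta_k=-\tfrac k2\dot\eta_k^{\,2}$ and $|F|^2\dot\eta_k^{\,2}=|A|^2=16|G_{\overline\zeta}|^2/(1-|G|^4)^2$ this turns $A_\zeta(1-|G|^4)$ into $8G(|G|^2-k)|G_{\overline\zeta}|^2/(1-|G|^4)$. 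Using $(|G|^4)_\zeta=2|G|^2(\overline G G_\zeta+G\,\overline{G_{\overline\zeta}})$ one gets $A(|G|^4)_\zeta=8|G|^2(\overline G G_\zeta G_{\overline\zeta}+G|G_{\overline\zeta}|^2)/(1-|G|^4)$, and subtracting, the $|G|^2G|G_{\overline\zeta}|^2$ pieces cancel, leaving $4G_{\zeta\overline\zeta}=\big(-8k|G_{\overline\zeta}|^2G-8|G|^2\overline G G_\zeta G_{\overline\zeta}\big)/(1-|G|^4)$, i.e. \eqref{equ} after dividing by $4$. On the (possibly empty) interior of $\{|G|=1\}$ the smoothness established above forces $G_{\overline\zeta}=0$ and then $G_\zeta=0$, so \eqref{equ} holds there as well, hence on all of $\Sigma$ by density.

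The only genuine work is the bookkeeping in the third step — keeping track of which derivatives of $F,\overline F,G,\overline G$ are substituted and checking that the terms carrying no factor of $k$ collapse exactly as claimed — and I do not anticipate any conceptual obstacle once the identity $\ddot\eta_k=-\tfrac k2\dot\eta_k^{\,2}$ has been noticed.
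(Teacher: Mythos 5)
Your derivation of \eqref{equ} is correct and is essentially the paper's own computation: you differentiate the second equation of \eqref{sistema3}, substitute $w_{\zeta}\overline{F}=\vert F\vert^{2}G$ and $\overline{F_{\overline{\zeta}}}=\tfrac12\dot{\eta}_k\vert F\vert^{2}\vert G\vert^{2}G$ from the other two equations, use the identity $\ddot{\eta}_k=-\tfrac{k}{2}\dot{\eta}_k^{2}$ and eliminate $\vert F\vert^{2}$ through $\dot{\eta}_k^{2}\vert F\vert^{2}=16\vert G_{\overline{\zeta}}\vert^{2}/(1-\vert G\vert^{4})^{2}$. This is exactly the route through the paper's \eqref{paso1} and \eqref{paso2}, only organized via the auxiliary function $A=\dot{\eta}_k\overline{F}$, and your bookkeeping checks out.

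There is, however, a gap in the smoothness part. The normal Gauss map $G$ is a map into the Riemann sphere, and $\Sigma$ may contain points where $G=\infty$, i.e.\ where $F=0$ and the Euclidean normal equals $(0,0,-1)$; these points are precisely what the three listed combinations are designed to handle (in the same spirit as $Fg$ and $Fg^{2}$ at the poles of $g$ in the classical Weierstrass data). Your identity $G_{\overline{\zeta}}/(1-\vert G\vert^{4})=\tfrac14\dot{\eta}_k\overline{F}$ does give smoothness of the first quotient everywhere, including at such points, but the step ``multiplying by the smooth functions $\overline{G}$ and $\overline{G}^{2}$'' fails there, since $\overline{G}$ is not finite, let alone smooth, at a pole of $G$. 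The paper instead uses \eqref{coordiso} and \eqref{sistema3}: since $w_{\zeta}=FG$ and $FG^{2}=F-2f$, one gets $\overline{G}\,G_{\overline{\zeta}}/(1-\vert G\vert^{4})=\tfrac{\dot{\eta}_k}{4}\overline{FG}=\tfrac{\dot{\eta}_k}{4}\,\overline{w_{\zeta}}$ and $\overline{G}^{2}G_{\overline{\zeta}}/(1-\vert G\vert^{4})=\tfrac{\dot{\eta}_k}{4}\overline{FG^{2}}=\tfrac{\dot{\eta}_k}{4}\bigl(\overline{F}-2\overline{f}\bigr)$, which are manifestly smooth on all of $\Sigma$ because $w$, $F$ and $f$ are. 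With that substitution (your treatment of the locus $\vert G\vert=1$, and of the interior of that locus for \eqref{equ}, is fine) the argument is complete.
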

\begin{proof}
From \eqref{sistema3} and by a straighforward computation we get  
\begin{equation}
\label{paso1}
G_{\zeta\overline{\zeta}}=\frac{1}{4}\ddot{\eta}_k\, w_{\zeta}\overline{F}(1-\vert G\vert^{4})-2\frac{\vert G\vert^{2}}{1-\vert G\vert^{4}}\overline{G}G_{\zeta}G_{\overline{\zeta}}.
\end{equation}
On the other hand, the second equation of \eqref{sistema3} gives 
\begin{equation}
\label{paso2}
\vert F\vert^{2}=\frac{16}{\dot{\eta}_k^{2}}\frac{\vert G_{\overline{\zeta}}\vert^{2}}{(1-\vert G\vert^{2})^{2}}.
\end{equation}
Taking into account that $\omega_{\zeta}=FG$, \eqref{equ} follows from \eqref{eta1}, \eqref{eta2}, \eqref{paso1} and   \eqref{paso2} .

But from \eqref{coordiso} and \eqref{sistema3} we have that
$$ \frac{G_{\overline{\zeta}}}{{1-\vert G\vert^{4}}}=\frac{\dot{\eta}_k}{4} \overline{F}, \quad  \frac{\overline{G}G_{\overline{\zeta}}}{{1-\vert G\vert^{4}}}=\displaystyle(\eta_k)_{\overline{\zeta}}, \quad \frac{\overline{G}^2G_{\overline{\zeta}}}{{1-\vert G\vert^{4}}}=\frac{\dot{\eta}_k}{4} (\overline{F}- 2 f)$$ 
which are smooth functions on $\Sigma$.
\end{proof}
\begin{lemma}\label{lemaf}
Let $G:{\cal U}:\rightarrow \mathbb{C}$ be  non-constant regular solution of \eqref{equ}. If  $ \Upsilon=\displaystyle \frac{G_{\overline{\zeta}}}{{1-\vert G\vert^{4}}}\not\equiv 0$, then  the zeros of $ \Upsilon$ are isolated and of finite order.
\end{lemma}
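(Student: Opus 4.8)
The strategy is to exhibit $\Upsilon$ as a pseudoanalytic (generalized analytic) function in the sense of Carleman--Bers--Vekua and then apply the similarity principle. The decisive step is to replace the second order equation \eqref{equ} for $G$ by a first order equation for $\Upsilon$ itself. I would differentiate the defining relation $\Upsilon=G_{\overline\zeta}/(1-\vert G\vert^{4})$ with respect to $\zeta$, insert the expression for $G_{\zeta\overline\zeta}$ coming from \eqref{equ}, and simplify using that $1-\vert G\vert^{4}$ is real, that $G_{\overline\zeta}/(1-\vert G\vert^{4})=\Upsilon$, and that $\partial_\zeta\overline G=\overline{G_{\overline\zeta}}$. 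I expect all the terms containing $G_\zeta$ to cancel, leaving
\[
\Upsilon_{\zeta}=2\,(\vert G\vert^{2}-k)\,G\,\vert\Upsilon\vert^{2}.
\]
By the preceding Proposition $\Upsilon$ is smooth on ${\cal U}$, and the coefficient $2(\vert G\vert^{2}-k)G$ is smooth there too.

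Taking complex conjugates and writing $\omega:=\overline{\Upsilon}$, this becomes a homogeneous Bers--Vekua equation $\omega_{\overline\zeta}=b\,\overline\omega$ with $b:=2(\vert G\vert^{2}-k)\,\overline G\,\overline\Upsilon$, a smooth (hence locally $L^{p}$ for every $p>2$) function. The similarity principle then provides, on a disk around any given point of ${\cal U}$, a factorization $\omega=\mathrm{e}^{s}\Phi$ with $s$ continuous and $\Phi$ holomorphic. Since ${\cal U}$ is simply connected, hence connected, and $\Upsilon\not\equiv0$, the holomorphic factor $\Phi$ cannot vanish identically on any subdisk --- otherwise $\Upsilon$ would vanish on an open set, and the set of points near which $\Upsilon$ vanishes identically, being open and closed, would be all of ${\cal U}$. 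As $\mathrm{e}^{s}$ is nowhere zero, the zeros of $\Upsilon=\overline\omega$ are exactly those of the nontrivial holomorphic function $\overline\Phi$: they are isolated, and the order of each (the vanishing order of $\Phi$ there) is finite.

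The part I expect to demand the most care is the computation in the first step: one must keep careful track of the $\zeta$-derivatives of $\vert G\vert^{4}=G^{2}\overline G^{2}$ and of $G_{\overline\zeta}$, and the crucial structural point --- that the $G_\zeta$ terms cancel, so that the resulting identity is a self-contained equation for $\Upsilon$ rather than one coupling $\Upsilon$ to $G_\zeta$ --- relies on the precise shape of the normal Gauss map equation \eqref{equ}. Once this first order equation is in hand, everything else is the standard theory of generalized analytic functions.
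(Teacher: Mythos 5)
Your argument is correct and is essentially the paper's own proof: both derive the first-order Vekua-type equation $\Upsilon_{\zeta}=2(\vert G\vert^{2}-k)\,G\,\vert\Upsilon\vert^{2}$ (the paper writes the conjugate equation for $\overline{G}_{\zeta}/(1-\vert G\vert^{4})$, with a sign that disagrees with this computation but is immaterial for the argument) and then invoke Vekua's similarity principle to factor $\Upsilon$ locally as a nowhere-vanishing factor times a holomorphic function, so the zeros are isolated and of finite order. Your explicit treatment of the possibility that the holomorphic factor vanishes identically, via connectedness of ${\cal U}$ and $\Upsilon\not\equiv 0$, is a detail the paper leaves implicit.
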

\begin{proof}
By \eqref{equ},  $\displaystyle \Upsilon= \frac{\overline{G}_{\zeta}}{{1-\vert G\vert^{4}}}$ satisfies,
\begin{align*}
 \Upsilon_{\overline{\zeta}} + 2 |\Upsilon|^2( |G|^2-k)\overline{G}=0
\end{align*}
and then it is possible to apply the study about generalized analytic functions of Vekua (cf. \cite[Chapter III]{Ve}) to obtain that $\Upsilon$ may be locally written in the form
$$  \Upsilon = \varXi {\cal H},$$ where $\varXi$ is a non vanishing ${\cal C}^2$-function and ${\cal H}$ is holomorphic. Hence, the zeros of $ \Upsilon$ must be   isolated and of finite order. 
\end{proof} 

\begin{remark}{\rm Notice that from \eqref{con1} and \eqref{coordiso}, the induced metric  $ds^2_k$ is given by $$ds^{2}_k=\vert F\vert^{2}(1+\vert G\vert^{2})^{2}\vert d\zeta\vert^{2}.$$
Hence, using  \eqref{con1}, \eqref{con2} and \eqref{sistema3},  we can get that the Gauss curvature $K_\psi$  of $ds^2_k$ is given by 
\begin{align*} K_\psi =-\frac{\dot{\eta}_k^{2}}{4}-4\frac{\vert G_{\zeta}\vert^{2}}{\vert F\vert^{2}(1+\vert G\vert^{2})^4}+\frac{\dot{\eta}_k^{2} |G|^2 k}{(1 + |G|^2)^2}.\end{align*}}
\end{remark}

\begin{remark}
{\rm Observe that from  \eqref{sistema3}, $G$ is holomorphic if and only if $|G|\equiv 1$ and, in this case, it is clear that $G$ must be constant and  $\Gamma \circ \psi(\Sigma)$ and $\psi(\Sigma)$   lie on a vertical plane in $\mathbb{R}^3$.}
\end{remark}

\section{A representation formula}
In this section we obtain a representation formula for surfaces in the class ${\cal S}_k$. As the  case $k=0$ (i.e.  of minimal surfaces in Hyperbolic space)  was studied in \cite{K}, from  now and on we will assume that $k\neq0$.
\begin{lemma}\label{lema2}
The equation \eqref{equ} gives the integrability conditions of the system \eqref{sistema3}.
\end{lemma}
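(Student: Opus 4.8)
The plan is to show that the single scalar PDE \eqref{equ} is precisely the compatibility (cross-derivative) condition needed to integrate the overdetermined first-order system \eqref{sistema3} for the pair $(F,w)$, once $G$ is prescribed as a solution of \eqref{equ}. Concretely, I regard \eqref{sistema3} as prescribing $F_{\overline\zeta}$, $G_{\overline\zeta}$ and $w_\zeta$ in terms of $F$, $G$ and their conjugates; since $G$ is given, the content is the existence of $F$ (and then $w$) solving the first and third equations, subject to the constraint that the \emph{second} equation of \eqref{sistema3} holds. The strategy is: (i) use the second equation of \eqref{sistema3} to express $|F|$ (hence, up to a unimodular factor, $F$ itself) in terms of $G$, as already recorded in \eqref{paso2}; (ii) differentiate the resulting expression for $F$ and check that the first equation of \eqref{sistema3} is satisfied if and only if \eqref{equ} holds; (iii) verify that the $1$-form $FG\,d\zeta + \overline{FG}\,d\overline\zeta$ is closed, so that $w$ can be recovered by integration, and that this closedness is again equivalent to \eqref{equ}.

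First I would set $\Upsilon = G_{\overline\zeta}/(1-|G|^4)$, which by the previous Proposition is smooth on $\Sigma$, and use \eqref{paso2} together with the second equation of \eqref{sistema3} to write $\overline F = (4/\dot\eta_k)\,\Upsilon$ wherever this is legitimate; note $\dot\eta_k$ is a nonvanishing function of $w$ only, from \eqref{eta1}. Thus $F$ is \emph{determined} by $G$ and $w$, and the system \eqref{sistema3} collapses to two conditions on the single unknown $w$: the equation $w_\zeta = FG$ and the requirement that $F=\overline{(4/\dot\eta_k)\Upsilon}$ be consistent with $F_{\overline\zeta}=-\tfrac12\dot\eta_k\,\overline F\,h$ (equivalently $2F_{\overline\zeta}=\dot\eta_k|F|^2|G|^2\overline G$). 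I would then compute $\partial_{\overline\zeta}$ of $\overline F = (4/\dot\eta_k)\Upsilon$, using $\dot{(\cdot)}$-chain rule through $w_{\overline\zeta}=\overline{FG}$ and the relation $\ddot\eta_k$, and compare with the required value of $F_{\overline\zeta}$; the identity \eqref{paso1} (which was derived from \eqref{sistema3}) shows that this comparison is, after clearing the nonvanishing factor $\overline F(1-|G|^4)$, exactly \eqref{equ}. This establishes that, given $G$ solving \eqref{equ}, the prescription for $F$ is self-consistent.

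It remains to integrate for $w$. The third equation demands $w_\zeta = FG$; since $\Sigma$'s parameter domain $\mathcal U$ is simply connected, such a real-valued $w$ exists iff the real $1$-form $\omega := FG\,d\zeta + \overline{FG}\,d\overline\zeta$ is closed, i.e. iff $(FG)_{\overline\zeta}$ is real. Expanding $(FG)_{\overline\zeta} = F_{\overline\zeta}G + FG_{\overline\zeta}$ and substituting the first and second lines of \eqref{sistema3} (now known to be consistent), one finds $(FG)_{\overline\zeta} = \tfrac14\dot\eta_k|F|^2(|G|^2+1)(1-|G|^2)\cdot(\text{something real})$; more directly, from the middle formula $\overline G G_{\overline\zeta}/(1-|G|^4) = (\eta_k)_{\overline\zeta}$ of the previous Proposition one sees $(FG)_{\overline\zeta}$ equals a manifestly real multiple of $|F|^2$, so $\omega$ is closed. (Alternatively, closedness here is again \eqref{equ} in disguise, which is the cleanest way to phrase it.) Hence $w$ exists; one must finally check that the triple $(F,G,w)$ so obtained does satisfy \emph{all three} equations of \eqref{sistema3} and reproduces the original $G$ — this is immediate by construction.

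The main obstacle I anticipate is bookkeeping rather than conceptual: carefully tracking the $w$-dependence of $\dot\eta_k$ and $\ddot\eta_k$ through the $\overline\zeta$-differentiation (since $w$ itself is one of the unknowns being solved for), and making sure the division by $\overline F$, by $1-|G|^4$, and by $\dot\eta_k$ is justified — the first two away from the isolated zeros handled by Lemma \ref{lemaf} and the locus $|G|=1$ (which forces $G$ constant by the last Remark, a case excluded since $G$ is non-constant), the last because $\dot\eta_k\neq0$ on $\mathbb{R}^3_k$. Once these caveats are in place, the equivalence ``\eqref{equ} $\iff$ compatibility of \eqref{sistema3}'' drops out of \eqref{paso1}–\eqref{paso2}, which were themselves obtained from \eqref{sistema3}.
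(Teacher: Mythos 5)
Your overall strategy --- recover $F$ from the second equation of \eqref{sistema3}, integrate for $w$ on the simply connected domain, and identify \eqref{equ} as the compatibility condition --- is the same as the paper's, but as written the integration step for $w$ has a genuine soft spot. Since $F=(4/\dot\eta_k(w))\,\overline{\Upsilon}$ contains the unknown $w$ through $\dot\eta_k$, the $1$-form $FG\,d\zeta+\overline{FG}\,d\overline{\zeta}$ is not a known form that you can simply test for closedness; the criterion ``$w$ exists iff $(FG)_{\overline{\zeta}}$ is real'' is therefore not correct as stated, and the verification you offer (substituting the first two lines of \eqref{sistema3} to see that $(FG)_{\overline{\zeta}}$ is a real multiple of $|F|^2$) presupposes that the solution $(F,w)$ you are trying to construct already exists; the same circularity affects the appeal to \eqref{paso1}, which was itself derived from \eqref{sistema3}. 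The observation you leave implicit, and which removes the entanglement you flag as ``bookkeeping'', is that for $k\neq 0$ one has $\dot\eta_k(w)=2/(kw)$, so $w\dot\eta_k$ is constant and the equation $w_\zeta=FG$ decouples into $(\log w)_\zeta=2k\,\overline{G}_\zeta G/(1-\vert G\vert^4)$, whose right-hand side depends on $G$ alone.

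This decoupling is exactly the paper's proof: one shows, directly from \eqref{equ}, that the real $1$-form $\frac{\overline{G}_\zeta G}{1-\vert G\vert^{4}}\,d\zeta+\frac{G_{\overline{\zeta}}\overline{G}}{1-\vert G\vert^{4}}\,d\overline{\zeta}$ is closed, integrates it by Poincar\'e's lemma to a real potential $\nu$, and then sets $w=\mathrm{e}^{2k\nu}$ and $F=2kw\,\overline{G}_\zeta/(1-\vert G\vert^{4})$, after which the three equations of \eqref{sistema3} (with $\eta_k$ as in \eqref{eta1}) are checked directly, the first one being the only place where \eqref{equ} is really used. So to repair your write-up you must (i) make the decoupling explicit, and (ii) prove the closedness of this $G$-only $1$-form as a consequence of \eqref{equ} --- your parenthetical remark that ``closedness is \eqref{equ} in disguise'' is precisely the computation that has to be carried out, not assumed. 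Note also that in this constructive direction no division by $\overline{F}$ is needed, so the precautions about zeros of $\Upsilon$ via Lemma \ref{lemaf} are not required for this lemma.
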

\begin{proof}
Assume $G$ is a solution of \eqref{equ} on a simply-connected domain, then  $$\frac{G_{\overline{\zeta}} \overline{G}}{1-|G|^4} d\overline{\zeta} +  \frac{\overline{G}_\zeta G}{1-|G|^4} d\zeta$$ is a  closed 1-form  and the Poincare's Lemma gives the existence of   real function $\nu$ satisfying 
$$\nu_{\overline{\zeta}} = \frac{G_{\overline{\zeta}} \overline{G}}{1-|G|^4}, \quad \nu_\zeta=  \frac{\overline{G}_\zeta G}{1-|G|^4} ,$$
By taking
$$w=\mathrm{e}^{2k\nu}, \qquad  F = 2 k w \frac{  \overline{G}_\zeta }{1-|G|^4},$$
one can check that $G$, $F$ and $w$ are solutions of \eqref{sistema3} when  $\eta_k$ is given by \eqref{eta1}.
\end{proof}
\subsection*{{\sc The case $k=1$: Translating Solitons.}}
\begin{theorem}[Weierstrass type representation for translating solitons]
\label{W1}
Let $G$ be a not holomorphic solution of \eqref{equ} defined on a simply connected domain ${\cal U}\subset \mathbb{C}$. Then the map $\widetilde{\psi}:{\cal U} \rightarrow \mathbb{R}^3$ given by
\begin{equation}\label{rw1}
\widetilde{\psi}=4\,\Re\left(\int _{\zeta_0}^\zeta\frac{\overline{G}_{\zeta}(1-G^{2})}{1-\vert G\vert^{4}} d\zeta,\int_{\zeta_{0}}^{\zeta} i\, \frac{\overline{G}_{\zeta}(1+G^{2})}{1-\vert G\vert^{4}}d\zeta, 2\int_{\zeta_{0}}^{\zeta} \frac{\overline{G}_{\zeta}G}{1-\vert G\vert^{4}}d\zeta\right)\end{equation}
is a conformal  translating soliton in $\mathbb{R}^{3}$ whose  Gauss map $N$  writes as follows
\begin{align*}
N=\left(\frac{2G}{1+\vert G\vert^{2}}, \frac{1-\vert G\vert^{2}}{1+\vert G\vert^{2}} \right).
\end{align*}
Conversely, any translating soliton which is not on a vertical plane can be locally represented in this way.
\end{theorem}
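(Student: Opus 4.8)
The plan is to prove the theorem in two directions, both of which rest on the structural results already established. For the direct implication, start with a not‑holomorphic solution $G$ of \eqref{equ} on the simply connected domain ${\cal U}$ with $k=1$. By Lemma \ref{lema2}, the equation \eqref{equ} is exactly the integrability condition of the system \eqref{sistema3}, so there exist $F$ and $w$ (with $w=\mathrm{e}^{2\nu}$, $F=2w\,\overline{G}_\zeta/(1-|G|^4)$ as in the proof of that lemma) solving \eqref{sistema3}. Then define $f,g,h$ by \eqref{coordiso}, i.e. $f=\tfrac12 F(1-G^2)$, $g=\tfrac{i}{2}F(1+G^2)$, $h=FG$; because $\dot\eta_1=\eta_1'$ with $\eta_1(w)=\tfrac{2}{1}\log w-\log 4$ gives $\tfrac12\dot\eta_1 = 1/w$, one checks that $\mathrm{e}^{\frac12\eta_1}=w/2$ and hence $x_\zeta = \mathrm{e}^{-\frac12\eta_1}f = (2/w)\cdot\tfrac12 F(1-G^2)$, and similarly for $y_\zeta$; a short computation using \eqref{sistema3} shows that $x_\zeta, y_\zeta, w_\zeta$ have the precise forms appearing under the integral sign in \eqref{rw1} after the conformal change $\Gamma$ of \eqref{gamma}. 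Concretely, $\Gamma\circ\psi = (x,y,z)$ with $z$ determined by $dz = \mathrm{e}^{-\frac12\varphi_1(z)}dw$; one verifies $(\Gamma\circ\psi)_\zeta = 2\bigl(\overline{G}_\zeta(1-G^2)/(1-|G|^4),\, i\,\overline{G}_\zeta(1+G^2)/(1-|G|^4),\, 2\overline{G}_\zeta G/(1-|G|^4)\bigr)$, so taking $2\Re\int$ of $2\times(\cdots)$ recovers exactly \eqref{rw1}.

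Next I would record why $\widetilde\psi$ is well defined, conformal, and a translating soliton. Well‑definedness of the real parts of the integrals is automatic on the simply connected ${\cal U}$ once one knows these three 1‑forms are closed, which is precisely the content of the integrability statement (Lemma \ref{lema2} together with the smoothness of $\overline{G}_\zeta/(1-|G|^4)$, $\overline{G}\,\overline{G}_\zeta/(1-|G|^4)$, $\overline{G}^2\overline{G}_\zeta/(1-|G|^4)$ from the Proposition). Conformality of $\widetilde\psi$ as a map into Euclidean $\R^3$ follows from \eqref{con1}–\eqref{con2}: the three integrand components, call them $\phi_1,\phi_2,\phi_3$, satisfy $\phi_1^2+\phi_2^2+\phi_3^2=0$ and $\sum|\phi_j|^2>0$ wherever $|G|\neq 1$, which is the classical Weierstrass conformality test; regularity is equivalent to $F\neq 0$, i.e. to $G$ being non‑constant and not identically holomorphic, which is guaranteed since a holomorphic $G$ would force $|G|\equiv1$ (Remark after the Proposition). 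Then $\Gamma\circ\psi = \widetilde\psi$ (up to reidentifying the third coordinate via $\Gamma$) is, by Proposition \ref{sistema1}, a minimal surface in $(\Omega_{\varphi_1},{\rm I}_{\varphi_1})$, which by \eqref{fminimal} with $\varphi_1(z)=z$ is exactly a translating soliton; the Gauss map formula $N=(2G/(1+|G|^2),(1-|G|^2)/(1+|G|^2))$ is the identity $G=\Pi\circ N$ already derived. (I should be slightly careful about whether $\widetilde\psi$ is literally $\Gamma\circ\psi$ or its image in $\R^3$; since for $k=1$ the map $\Gamma$ only reparametrizes the last coordinate and the surface is presented in $\R^3$, this is a matter of bookkeeping and the Gauss map is unaffected.)

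For the converse, let $S\subset\R^3$ be a translating soliton not contained in a vertical plane, and let $\psi:\Sigma\to(\R^3_1,g_1)$ be the corresponding minimal immersion (isometrically the same surface via $\Gamma$). Locally choose a conformal parameter $\zeta$, so $\psi$ is harmonic and Proposition \ref{sistema1} and the subsequent computations apply: we get $F,G,w$ satisfying \eqref{sistema3}, and by the Proposition $G$ solves \eqref{equ}. Since $S$ is not on a vertical plane, by the last Remark $|G|\not\equiv1$, so $G$ is non‑constant and not holomorphic; shrinking to a simply connected neighborhood on which, by Lemma \ref{lemaf}, $\Upsilon = \overline{G}_\zeta/(1-|G|^4)$ has only isolated zeros, the coordinate formulas \eqref{coordiso} express $f,g,h$ through $F$ and $G$, and integrating $(\Gamma\circ\psi)_\zeta$ reproduces \eqref{rw1} exactly (the additive constant of integration corresponds to the ambient translation freedom).

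The main obstacle I expect is bookkeeping rather than conceptual: pinning down all the constants in the passage from the abstract $(F,G,w)$ of Lemma \ref{lema2} to the explicit integrand in \eqref{rw1}, i.e. keeping straight the factors of $2$, the role of $\mathrm{e}^{\frac12\eta_1}=w/2$, and the reparametrization $dz=\mathrm{e}^{-\frac12\varphi_1}dw$ so that the third coordinate in \eqref{rw1} indeed equals the Euclidean height of the translating soliton. A secondary point to handle carefully is regularity at the isolated zeros of $\Upsilon$: one must argue (via the smoothness statements in the Proposition and Lemma \ref{lemaf}) that the immersion extends smoothly and remains an immersion there, or else note that such points are where $F$ could vanish and verify from \eqref{sistema3} and non‑constancy of $G$ that this does not actually occur on a regular soliton.
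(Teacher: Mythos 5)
Your proposal is correct and follows essentially the same route as the paper: Lemma \ref{lema2} supplies $F$ and $w$ from the given $G$, the identities \eqref{coordiso} together with the coordinate change \eqref{gamma} (with $\mathrm{e}^{\frac12\eta_1}=w/2$) produce exactly the integrands of \eqref{rw1} --- your constants check out --- and the converse is read off from Section 2, just as in the paper. The additional remarks on conformality via $\phi_1^2+\phi_2^2+\phi_3^2=0$ and on the isolated zeros of $\Upsilon$ are extra detail on the same argument, not a different approach.
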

\begin{proof}
From Lemma \ref{lema2}, by taking the functions
$$ \log (w)=  4 \Re \int  \frac{\overline{G}_\zeta G}{1-|G|^4} d\zeta, \quad F = 2  w \frac{  \overline{G}_\zeta }{1-|G|^4}, \quad \eta_1= \log\frac{w^2}{4},$$
we have that $ G$, $F$ and $w$ are solutions of \eqref{sistema3} on ${\cal U}$. Thus, from \eqref{coordiso},
\begin{align*}
\psi &= \left( \Re \int \mathrm{e}^{ -\frac{\eta_1}{2}} F(1- G^2) d\zeta, \  \Re \int \mathrm{e}^{ -\frac{\eta_1}{2}}i\,  F(1+ G^2) d\zeta, \ w\right) \\
&=\left( 4 \Re \int  \frac{\overline{G}_\zeta (1-G^2)}{1-|G|^4} d\zeta, \  4 \Re \int i\, \frac{\overline{G}_\zeta (1+G^2)}{1-|G|^4} d\zeta, \mathrm{e}^{  4 \Re \int  \frac{\overline{G}_\zeta G}{1-|G|^4} d\zeta}\right).
\end{align*}
is a minimal surface in $(\mathbb{R}_1^3,g_1)$, where $ \displaystyle g_1=\frac{w^2}{4}(dx^2+dy^2) + dw^2$. But then, from \eqref{gamma}, $\widetilde{\psi}=\Gamma\circ \psi:{\cal U}\rightarrow \mathbb{R}^3$ is a conformal translator soliton given as in \eqref{rw1}.

The converse follows from Section 2.
\end{proof}

\subsection*{{\sc The case $k\neq1$: Singular Minimal Surfaces.}}

\begin{theorem}[Weiertrass representation of  singular minimal surfaces]
\label{Wk}
Let $G$ be a not holomorphic solution of \eqref{equ} defined on a simply connected domain ${\cal U}\subset \mathbb{C}$. Then the map $\widetilde{\psi}:{\cal U} \rightarrow \mathbb{R}^3$ given by
\begin{align}
\widetilde{\psi}=& \bigg( \displaystyle 4 k\Re\int_{\zeta_{0}}^{\zeta}  \frac{\overline{G}_{\zeta}(1-G^{2})}{1-\vert G\vert^{4}}\Gamma\, d\zeta, 
\displaystyle 4 k\Re\int_{\zeta_{0}}^{\zeta}  i \frac{\overline{G}_{\zeta}(1+G^{2})}{1-\vert G\vert^{4}}\Gamma\, d\zeta , 
\displaystyle\frac{2k}{k-1} \Gamma\bigg), \label{rw2}
\end{align}
where 
\begin{align*}
\Gamma = \mathrm{e}^{\displaystyle4(k-1)\Re\int_{\zeta_{0}}^{\zeta}  \frac{\overline{G}_{\zeta}G}{1-\vert G\vert^{4}}\, d\zeta}
\end{align*}
is a conformal  $\displaystyle\frac{2}{k-1}$-singular minimal surface in $\mathbb{R}^{3}_+$ whose  Gauss map $N$  writes as follows
\begin{align*}
N=\left(\frac{2G}{1+\vert G\vert^{2}}, \frac{1-\vert G\vert^{2}}{1+\vert G\vert^{2}} \right).
\end{align*}
Conversely, any singular minimal surface in $\mathbb{R}^3_+$  which is not on a vertical plane can be locally represented in this way.
\end{theorem}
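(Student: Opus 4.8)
The plan is to mirror, almost verbatim, the proof of Theorem~\ref{W1}, using Lemma~\ref{lema2} as the engine and only adjusting the bookkeeping that depends on the value of $k$. First I would invoke Lemma~\ref{lema2}: since $G$ is a non-constant (not holomorphic) solution of \eqref{equ} on the simply connected domain $\mathcal{U}$, the $1$-form $\frac{\overline{G}_\zeta G}{1-|G|^4}d\zeta + \frac{G_{\overline{\zeta}}\overline{G}}{1-|G|^4}d\overline{\zeta}$ is closed, so Poincar\'e's Lemma yields a real function $\nu$ with $\nu_\zeta = \frac{\overline{G}_\zeta G}{1-|G|^4}$. Following the recipe in Lemma~\ref{lema2}, I would set
$$w = \mathrm{e}^{2k\nu} = \Gamma^{1/(2(k-1))} \quad\text{(equivalently } \log w = 4k\,\Re\!\int \tfrac{\overline{G}_\zeta G}{1-|G|^4}\,d\zeta\text{)},\qquad F = 2kw\,\frac{\overline{G}_\zeta}{1-|G|^4},$$
and $\eta_k$ as in \eqref{eta1}; the lemma guarantees that $(G,F,w)$ solve the system \eqref{sistema3}, so by Proposition~\ref{sistema1} the map $\psi = (x,y,w)$ reconstructed from \eqref{coordiso} via $x_\zeta = \mathrm{e}^{-\eta_k/2}f$, $y_\zeta = \mathrm{e}^{-\eta_k/2}g$ is a harmonic map, i.e. a minimal immersion into $(\mathbb{R}^3_k,g_k)$.

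Next I would carry out the explicit integration. From $\eta_k = \frac{2}{k}\log w - \log 4$ we get $\mathrm{e}^{-\eta_k/2} = 2 w^{-1/k}$, so that $\mathrm{e}^{-\eta_k/2}f = \mathrm{e}^{-\eta_k/2}\cdot\frac{1}{2}F(1-G^2) = w^{-1/k}F(1-G^2)$, and substituting $F = 2kw\,\frac{\overline{G}_\zeta}{1-|G|^4}$ gives $x_\zeta = 2k\,w^{1-1/k}\,\frac{\overline{G}_\zeta(1-G^2)}{1-|G|^4}$, with the analogous formula for $y_\zeta$ and $w_\zeta = FG = 2kw\,\frac{\overline{G}_\zeta G}{1-|G|^4}$. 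The key point is to identify $w^{1-1/k}$ with $\Gamma$ up to a constant: from $\log w = 4k\,\Re\!\int\frac{\overline{G}_\zeta G}{1-|G|^4}d\zeta$ one has $(1-\tfrac1k)\log w = 4(k-1)\,\Re\!\int\frac{\overline{G}_\zeta G}{1-|G|^4}d\zeta$, hence $w^{1-1/k} = \Gamma$ exactly; also $w = \mathrm{e}^{\frac{1}{1-1/k}\cdot(1-1/k)\log w}$ rearranges to $w = \Gamma^{k/(k-1)}$, so the third coordinate $z$ of $\widetilde\psi = \Gamma\circ\psi$, determined by $dz = \mathrm{e}^{-\varphi_k(z)/2}dw$ with $\varphi_k(z) = \frac{2}{k-1}\log z$, integrates (after checking the constant) to $z = \frac{2k}{k-1}\Gamma$. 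Taking $\Re\int$ of the first two coordinate $1$-forms then produces exactly \eqref{rw2}, and by \eqref{gamma} the composition $\widetilde\psi = \Gamma\circ\psi$ is a conformal minimal surface for $\mathcal{A}_{\varphi_k}$, i.e. a $\frac{2}{k-1}$-singular minimal surface in $\mathbb{R}^3_+$; its Euclidean Gauss map is the $N$ computed in Section~2 in terms of $G$. The converse direction is immediate from Section~2: given such a surface not contained in a vertical plane, Proposition~\ref{sistema1} and the discussion following it produce a non-constant normal Gauss map $G$ solving \eqref{equ} on a simply connected neighbourhood of any point, and the formula above recovers the immersion up to translation.

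The step I expect to require the most care is pinning down the normalizing constants — matching $dz = \mathrm{e}^{-\varphi_k(z)/2}dw$ through the change of variable $\Gamma$ so that the third component comes out as precisely $\frac{2k}{k-1}\Gamma$ rather than a constant multiple, and verifying that the $\Gamma$ appearing inside the first two integrands of \eqref{rw2} is genuinely $w^{1-1/k}$ and not $w$ or some other power (the sign of $k-1$ and the case $k<1$ vs.\ $k>1$ must be tracked, and one must ensure $z>0$, which holds since $\Gamma>0$). Everything else is a routine transcription of the $k=1$ argument with $\eta_1 = \log(w^2/4)$ replaced by $\eta_k = \frac{2}{k}\log w - \log 4$; the regularity of the integrands (so that the formula defines an immersion away from branch points) is exactly Lemma~\ref{lemaf} together with the smoothness statements in the Proposition preceding it.
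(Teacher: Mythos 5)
Your proposal follows essentially the same route as the paper's proof: invoke Lemma \ref{lema2} to produce $w$, $F$, $\eta_k$ solving \eqref{sistema3}, reconstruct the minimal immersion $\psi$ in $(\mathbb{R}^3_k,g_k)$ from \eqref{coordiso}, identify $w^{(k-1)/k}=\Gamma$ and push through the coordinate change \eqref{gamma} to obtain \eqref{rw2} with $z=\frac{2k}{k-1}\Gamma$, the converse coming from Section 2. The only blemish is the parenthetical claim $w=\Gamma^{1/(2(k-1))}$, which is inconsistent with your own (correct) later identities $\log w=4k\,\Re\!\int\frac{\overline{G}_\zeta G}{1-|G|^4}\,d\zeta$ and $w=\Gamma^{k/(k-1)}$; since the actual computation uses the correct relation, this is a harmless slip.
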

\begin{proof}
By using again Lemma \ref{lema2} 
 we have that the functions $ G$, $F$ and $w$ are solutions of \eqref{sistema3} on ${\cal U}$, where now
$$ \log (w)=  4 k \Re \int  \frac{\overline{G}_\zeta G}{1-|G|^4} d\zeta, \quad F = 2 k w \frac{  \overline{G}_\zeta }{1-|G|^4}, \quad \eta_k=\frac{2}{k} \log w-\log 4.$$
  Thus, as in Theorem \ref{W1}, \begin{align*}
\psi &= \left( \Re \int \mathrm{e}^{ -\frac{\eta_k}{2}} F(1- G^2) d\zeta, \  \Re \int \mathrm{e}^{ -\frac{\eta_k}{2}}i\,  F(1+ G^2) d\zeta, \ w\right) \\
&=\left( 4 k \Re \int  w^{\frac{k-1}{k}}\frac{\overline{G}_\zeta (1-G^2)}{1-|G|^4} d\zeta, \  4 k \Re \int i\, w^{\frac{k-1}{k}} \frac{\overline{G}_\zeta (1+G^2)}{1-|G|^4} d\zeta,w\right).
\end{align*}
is a minimal surface in $(\mathbb{R}_k^3,g_k)$, where $ g_k=\displaystyle \frac{ w^\frac{2}{k}}{4}(dx^2+dy^2) + dw^2$. But then, from \eqref{gamma} we get 
\begin{align*}
z=\frac{2k}{k-1}\mathrm{e}^{ 4(k-1)\Re\int  \frac{\overline{G}_{\zeta}G}{1-\vert G\vert^{4}}\, d\zeta}.
\end{align*} and 
$\widetilde{\psi}=\Gamma\circ \psi:{\cal U}\rightarrow \mathbb{R}^3$ is a conformal $\frac{2}{k-1}$-singular minimal surface which is given   as in \eqref{rw2}.

The converse is clear from Section 2.

\end{proof}

\section{Applications}
\subsection{When $\arg(G)$ is a harmonic function}
In this section we characterize minimal surfaces for the weighted area functional ${\cal A}_{\varphi_k}$ with $\arg(G)$ a harmonic function.

Let $\widetilde{\psi}: \Sigma \rightarrow \mathbb{R}^3$ be a minimal immersion for the   weighted area functional ${\cal A}_{\varphi_k}$ 
with euclidean Gauss map $G= m \mathrm{e}^{i \nu}$, $0<m<1$. Then, from \eqref{equ}, for any complex parameter $\zeta$ on a simply connected domain ${\cal U}\subset \mathbb{C}$, we have
\begin{align}
&m (1-m^4) \nu_{\zeta\overline{\zeta}} + (1 + m^4) (\nu_{\zeta} m_{\overline{\zeta}} + \nu_{\overline{\zeta}} m_\zeta) = 0\label{eqarg}\\
&(1-m^4)m_{\zeta\overline{\zeta}} - m(1+m^4-2k \, m^{2})|\nu_\zeta|^2 + 2 m (k + m^2) |m_\zeta|^2 +\label{eqmod}\\
& +  2 k\, i \,m^2(\nu_{\zeta} m_{\overline{\zeta}}- \nu_{\overline{\zeta}} m_\zeta) =0\nonumber
\end{align}
\subsection*{{\sc Case A: $G=\zeta_0 m$, $|\zeta_0|=1$.}}
In this case up to a vertical rotation in $\mathbb{R}^3$ we can assume that $\zeta_0=1$ and, from \eqref{eqmod}, $G=m$ is a real function satisfying 
$$\frac{ m_{\zeta\overline{\zeta}}}{m_\zeta} + 2 m (k + m^2) \frac{m_{\overline{\zeta}}}{(1-m^4)} =\left( \log \left( m_\zeta (1+m^2)^{\frac{k-1}{2}}(1-m^2)^{-\frac{k+1}{2}}\right)\right)_{\overline{\zeta}}=0.$$
Hence, there exists  an harmonic  function $u:{\cal U} \rightarrow \mathbb{R}$  given by 
$$ u = 2 \Re \int (1+m^2)^{\frac{k-1}{2}}(1-m^2)^{-\frac{k+1}{2}} m_\zeta  d\zeta. $$
Now  we consider $v$ the harmonic conjugate of $u$, i.e. $\zeta = u + i v$ is a conformal parameter.  After a straightforward computation,  Theorems \ref{W1} and \ref{Wk} give that, up to a translation  in $\mathbb{R}^3$,  from $k=1$, 
$$ \widetilde{\psi} (u,v) = (2 \arctan(\tanh(u)), 2 v, \log (\cosh^2(u) + \sinh^2(u)))$$
and for $k\neq 1$, 
$$ \widetilde{\psi} (u,v) = \left(2k \mathcal{I}(u), 2 k v, \frac{2k}{k-1}\left(\frac{1+m(u)^{2}}{1-m(u)^{2}}\right)^{\frac{k-1}{2}}\right), $$
where $\mathcal{I}$ is a smooth function depending only of $u$ such that
$$\mathcal{I}'(u)=\frac{m'(u)}{1+m^{2}(u)}\left(\frac{1+m(u)^{2}}{1-m(u)^{2}}\right)^{\frac{k-1}{2}}$$
and $m$ is a solution of the following ODE,
$$ m'  ( 1+ m^2)^{\frac{k-1}{2}}  =(1-m^2)^ {\frac{k+1}{2}},$$
and $(\ '\ )$ means derivative respect $u$.

\

The above expressions give the following 
\begin{theorem} If $G=\zeta_0 \, m$, $|\zeta_0|=1$, is not constant, then either $k=1$ and $\widetilde{\psi}(\Sigma)$ lies on a grim reaper cylinder or $k\neq1$ and $\widetilde{\psi}(\Sigma)$  lies on a $\frac{2}{k-1}$-singular minimal catenary cylinder.
\end{theorem}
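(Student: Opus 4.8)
The plan is to take the explicit parametrizations of $\widetilde{\psi}$ already derived just above the statement and verify directly that each one traces out the claimed surface. Since in both cases the first two coordinates of $\widetilde{\psi}$ depend on $u$ and $v$ only through $u$ in the first slot and linearly through $v$ in the second slot, while the third coordinate depends only on $u$, the image is a cylinder over a planar profile curve in the $xz$-plane: for fixed $u$ the point sweeps out a horizontal straight line in the $y$-direction. So the whole problem reduces to identifying the profile curve $u\mapsto(\text{first coordinate}(u),\,\text{third coordinate}(u))$.

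For $k=1$ I would start from
$$\widetilde{\psi}(u,v)=\bigl(2\arctan(\tanh u),\,2v,\,\log(\cosh^2u+\sinh^2u)\bigr),$$
set $x(u)=2\arctan(\tanh u)$ and $z(u)=\log(\cosh^2u+\sinh^2u)=\log(\cosh 2u)$, eliminate $u$, and check that the resulting relation between $x$ and $z$ is exactly the profile of a grim reaper cylinder, i.e. $z=-\log\cos(x/2)$ (up to the normalization of the translating-soliton equation and a vertical translation): indeed $\cos(x/2)=\cos(\arctan(\tanh u))=1/\sqrt{1+\tanh^2u}$, and a short manipulation turns $\log(\cosh 2u)$ into $-\log\cos(x/2)$ plus a constant. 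One then notes this curve is precisely the profile generating the grim reaper, which is the unique (up to rigid motion and dilation) translating soliton invariant under horizontal translations.

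For $k\neq1$ I would argue the same way with the profile curve
$$u\longmapsto\Bigl(2k\,\mathcal{I}(u),\ \frac{2k}{k-1}\bigl(\tfrac{1+m(u)^2}{1-m(u)^2}\bigr)^{\frac{k-1}{2}}\Bigr),$$
where $m$ solves the ODE $m'(1+m^2)^{\frac{k-1}{2}}=(1-m^2)^{\frac{k+1}{2}}$ and $\mathcal{I}'(u)=\frac{m'(u)}{1+m^2(u)}\bigl(\tfrac{1+m^2}{1-m^2}\bigr)^{\frac{k-1}{2}}$. Introducing the new parameter $s=\mathcal{I}(u)$ (legitimate since $\mathcal{I}'>0$ wherever $0<m<1$) and writing $z=z(s)$ for the third coordinate, the chain rule together with both ODEs should collapse $z(s)$ into a solution of the ODE characterizing the $\frac{2}{k-1}$-singular minimal catenary — the profile curve whose rotation/translation cylinder is the classical singular minimal catenary cylinder; concretely one checks $z z''=(k-1)\bigl((z')^2+\text{const}\bigr)$ or the equivalent first-order relation, which is exactly the equation of a $\lambda$-catenary with $\lambda=\frac{2}{k-1}$. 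Finally one invokes uniqueness: a singular minimal surface invariant under horizontal translations is a cylinder over a singular minimal catenary, so $\widetilde{\psi}(\Sigma)$ lies on such a cylinder.

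The main obstacle is the bookkeeping in the $k\neq1$ case: one must carefully change parameters from $u$ to the arclength-like variable $s=\mathcal{I}(u)$, substitute both defining ODEs, and recognize the resulting second-order relation as the singular minimal catenary equation with the correct weight $\frac{2}{k-1}$ and with matching integration constants; keeping the $\Gamma$-factor and the constant $\frac{2k}{k-1}$ straight under this substitution is where sign and normalization errors are most likely to creep in. The $k=1$ case is comparatively routine trigonometry. In both cases the cylindrical structure (straight lines in the $y$-direction for fixed $u$) is immediate and needs no real work.
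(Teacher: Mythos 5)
Your route is the paper's own: the text gives no separate proof --- the theorem is read off from the explicit parametrizations derived immediately before it --- so what you are supplying is the identification step, which is exactly what is left implicit. Two of your concrete identifications, however, are wrong as stated and need repair. For $k=1$, from $\tan(x/2)=\tanh u$ one gets $\cos x=\frac{1-\tanh^{2}u}{1+\tanh^{2}u}=\frac{1}{\cosh 2u}$, so the profile is exactly $z=\log\cosh 2u=-\log\cos x$ with $|x|<\pi/2$, i.e.\ the grim reaper for the normalization fixed by $\varphi_{1}(z)=z$; your claim that $\log(\cosh 2u)$ equals $-\log\cos(x/2)$ plus a constant is false (and $-\log\cos(x/2)$ is not a grim reaper profile under any rescaling), so the ``short manipulation'' as you describe it does not close, although the corrected one is a one-line computation. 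For $k\neq1$, the relation you propose to verify is also not the right one: taking $x=2k\,\mathcal{I}(u)$ as graph variable one finds $z'(x)=\frac{2m}{1-m^{2}}$, hence $\sqrt{1+(z')^{2}}=\frac{1+m^{2}}{1-m^{2}}$, and then $z\,z''=\frac{2}{k-1}\bigl(1+(z')^{2}\bigr)$, which is the $\frac{2}{k-1}$-catenary equation; your $z z''=(k-1)\bigl((z')^{2}+\mathrm{const}\bigr)$ has the constant inverted and an undetermined constant, so as written that check would fail. Your fallback is legitimate and in fact cleaner: Theorems \ref{W1} and \ref{Wk} already guarantee $\widetilde{\psi}\in\mathcal{S}_{k}$, the parametrization is visibly invariant under translations in the $e_{2}$-direction, and the classification of translation-invariant surfaces in $\mathcal{S}_{k}$ leaves only vertical planes and grim reaper, respectively $\frac{2}{k-1}$-catenary, cylinders --- but you must then add the observation that vertical planes are excluded here because $m$, hence the Gauss map, is non-constant.
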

\subsection*{{\sc Case B:} $G=m\,\mathrm{e}^{i \nu}$, $\nu$ {\rm not constant and} $ \nu_{\zeta\overline{\zeta}} =0$.}
In this case on ${\cal U}$ we can take the following  conformal parameter $ \zeta = u + i \nu$ for some harmonic function $u$ and from \eqref{eqarg} $m_\nu\equiv 0 $, that is, $m$ is a function depending only on $u$ which satisfies, from \eqref{eqmod},  the following ODE:
$$
\frac{m - m'}{\sqrt{1-m^4}} = \left(\frac{1-m^2}{1+m^2}\right)^{\displaystyle\frac{k}{2}} \mathrm{e}^\phi, \quad \phi(u)= \int \frac{m^4+1-2 k m^2}{1-m^4} du$$
Now, as in the Case A, by using the above expressions and Theorems \ref{W1} and \ref{Wk}, we have that, up to a  translation in $\mathbb{R}^3$, either $k=1$ and 
\begin{align*}
\widetilde{\psi} &=\left(4\frac{m-m'}{1-m^2} \cos(v),\ 4\frac{m-m'}{1-m^2} \sin(v), \ \log\frac{1+m^2}{1-m^2} - 4 \int\frac{m^2}{1-m^4} du\right)
\end{align*}
or $k\neq 1$ and 
\begin{align*}
\widetilde{\psi} &=\left(4k \gamma \frac{m-m'}{1-m^2} \cos(v),\ 4k \gamma\frac{m-m'}{1-m^2} \sin(v), \frac{2 k}{k-1} \gamma\right), 
\end{align*}
where
\begin{align*}
\gamma &= \left(\frac{1+m^2}{1-m^2}\right)^{\frac{k-1}{4}}\mathrm{e}^{{(1-k)}\int\frac{m^2}{1-m^4}du},
\end{align*}
which gives
\begin{theorem} If $G=m\mathrm{e}^{i \nu}$ with $\nu$ not constant and $ \nu_{\zeta\overline{\zeta}} =0$, then $\widetilde{\psi}(\Sigma)$ lies on either  a vertical revolution translating soliton when $k=1$ or a vertical revolution $\frac{2}{k-1}$-singular minimal surface when $k\neq 1$.
\end{theorem}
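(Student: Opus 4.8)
The plan is to derive the statement directly from the explicit parametrizations of $\widetilde\psi$ produced in Case B together with Theorems \ref{W1} and \ref{Wk}. First I would note that, in the conformal parameter $\zeta=u+i\nu$ adopted there (a legitimate coordinate wherever $d\nu\neq0$, hence off an isolated set, since $\nu$ is harmonic and $G$ is non-constant), the function $m$ and therefore $\gamma$ and $m-m'$ depend on $u$ alone. Writing $v=\nu$, both displayed formulas then take the shape
$$\widetilde\psi(u,v)=\big(R(u)\cos v,\ R(u)\sin v,\ Z(u)\big),$$
with $R(u)=4(m-m')/(1-m^2)$ and $Z(u)=\log\frac{1+m^2}{1-m^2}-4\int\frac{m^2}{1-m^4}\,du$ when $k=1$, and $R(u)=4k\gamma(m-m')/(1-m^2)$, $Z(u)=\frac{2k}{k-1}\gamma$ when $k\neq1$.

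The key point is then purely geometric: a map of the above form is equivariant under the one-parameter group $\{R_c\}$ of rotations about the $z$-axis, namely $\widetilde\psi(u,v+c)=R_c\big(\widetilde\psi(u,v)\big)$, so $\widetilde\psi(\mathcal{U})$ is an open piece of the surface of revolution $\mathcal{R}$ generated by revolving the profile curve $u\mapsto(R(u),0,Z(u))$ about the $z$-axis. Since the weighted minimal surface equation \eqref{fminimal} is invariant under rotations about the $z$-axis (because $\varphi_k$ depends only on $z$), and since $\widetilde\psi$ solves it by Theorem \ref{W1} (resp. Theorem \ref{Wk}), rotating the wedge $\widetilde\psi(\mathcal{U})$ by all angles shows that the whole surface $\mathcal{R}$ is a translating soliton (resp. a $\frac{2}{k-1}$-singular minimal surface); that is, $\mathcal{R}$ is a vertical revolution translating soliton (resp. a vertical revolution $\frac{2}{k-1}$-singular minimal surface), and $\widetilde\psi(\Sigma)$ lies on it.

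For completeness, and to pin down which member of the rotationally invariant family appears, I would also check the profile ODE directly: a surface of revolution $\big(R(u)\cos v,R(u)\sin v,Z(u)\big)$ belongs to $\mathcal{S}_k$ if and only if $(R,Z)$ satisfies the classical first-order ODE obtained by restricting \eqref{fminimal} to such surfaces. Substituting the explicit $R(u),Z(u)$ above and using the ODE for $m$ established in Case B, namely $\frac{m-m'}{\sqrt{1-m^4}}=\big(\frac{1-m^2}{1+m^2}\big)^{k/2}\mathrm{e}^{\phi}$ with $\phi(u)=\int\frac{m^4+1-2km^2}{1-m^4}\,du$, one verifies that this profile ODE indeed holds, confirming that $\widetilde\psi(\Sigma)$ is contained in the asserted example.

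The main obstacle I anticipate is bookkeeping rather than conceptual content: one must handle the regularity issues — that $\zeta=u+i\nu$ is a genuine coordinate away from the isolated zeros of $d\nu$, that the profile curve $(R(u),Z(u))$ is regular and nonconstant (so that $\mathcal{R}$ is an honest immersed surface), including the behaviour at points where $m-m'=0$ and the curve meets the axis — and that the normalizations in \eqref{rw1} and \eqref{rw2} match the standard normalization of rotationally invariant translators and singular minimal surfaces. Once these routine verifications are dispatched, the conclusion follows at once from the equivariance noted above.
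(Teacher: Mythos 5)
Your proposal is correct and follows essentially the same route as the paper: both rest on the explicit Case B parametrizations obtained from the Weierstrass formulas of Theorems \ref{W1} and \ref{Wk}, from which the image is visibly of the form $(R(u)\cos v, R(u)\sin v, Z(u))$ and hence lies on a vertical rotational example of ${\cal S}_k$. Your equivariance remark and the profile-ODE check merely make explicit the final step that the paper leaves implicit after the displayed formulas.
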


\subsection{The Cauchy's problem.}
Now we deal with   the general Cauchy Problem for  surfaces in the class ${\cal S}_k$ specified in Section one.  To solve it we shall called any pair $\beta$, $V$ in the conditions of that problem {\sl a pair of Bj\"orling data}.
\begin{theorem}\label{cp}
For any $k\in \mathbb{R}$, $k\neq 0$, there exists a unique surface in the class ${\cal S}_k$ which is a  solution to the Cauchy problem with  Bj\"orling data $\beta=(\beta_1,\beta_2,\beta_3)$, $V=(V_1,V_2,V_3)$. This solution, $$\psi:{\cal U}=I\times]-\epsilon,\epsilon[\subseteq\mathbb{C}\longrightarrow \mathbb{R}^3,$$ can be constructed in a neigbourhood of $\beta$ as follows: let $G:{\cal U}\rightarrow \mathbb{C}$ be the unique solution to the following system of Cauchy-Kowalewski's type, {\rm \cite{Pe}},
\begin{equation}\label{ckp}
\left\{
\begin{array}{l}\displaystyle
G_{\zeta\overline{\zeta}}+2\,\frac{\vert G\vert^{2}G_{\zeta}G_{\overline{\zeta}}}{1-\vert G\vert^{4}}\ \overline{G}+2k\,\frac{ \vert G_{\overline{\zeta}}\vert^{2}}{1-\vert G\vert^{4}}\ G=0,\\ 
\\
\displaystyle
G(u,0) =\frac{\phi_3(u)}{\phi_1(u) - i \phi_2(u)}= -\frac{\phi_1(u) + i \phi_2(u)}{\phi_3(u)}, \\ \\
G_{\overline{\zeta}}(u,0)=\left\{\displaystyle
\frac{1-|G(u,0)|^4}{4}\left(\overline{\phi}_{1}(u)+i\overline{\phi}_{2}(u) \right), \text{ if } k=1, \atop \displaystyle
\frac{1-|G(u,0)|^4}{2(k-1)\beta_{3}}\left(\overline{\phi}_{1}(u)+i\overline{\phi}_{2}(u)\right), \text{ if } k\neq 1,
\right.
\end{array}\right.
\end{equation}
where 
\begin{equation}\label{phi}
\phi(u)=(\phi_1(u),\phi_2(u),\phi_3(u))=\frac{1}{2}(\beta'(u)-i\beta'(u)\wedge V(u)), \quad u\in I.
\end{equation}
Then $\psi$ is given, up to an appropriate translation, by \eqref{rw1} if $k=1$ and  by  \eqref{rw2} if $k\neq1$ and
\begin{align*}
N=\left(\frac{2G}{1+\vert G\vert^{2}}, \frac{1-\vert G\vert^{2}}{1+\vert G\vert^{2}} \right)
\end{align*}
is its  Gauss map.

\end{theorem}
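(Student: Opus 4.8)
The plan is to reduce the Cauchy problem for surfaces in $\mathcal{S}_k$ to the Cauchy problem for the normal Gauss map equation \eqref{equ}, and then invoke the Weierstrass representations of Theorems \ref{W1} and \ref{Wk}. First I would verify that the system \eqref{ckp} is of Cauchy–Kowalewski type: solving the PDE for $G_{uu}$ (equivalently $G_{vv}$) as an analytic function of lower-order derivatives on the noncharacteristic line $v=0$, and checking that the prescribed data $G(u,0)$ and $G_v(u,0)$ — the latter obtained from $G_{\overline\zeta}(u,0)$ and $G_\zeta(u,0) = \overline{G_{\overline\zeta}(u,0)}$ via the relation between Wirtinger derivatives — are real-analytic; analyticity of $\beta$ and $V$, together with $|\Pi\circ V|<1$ and $\beta_3>0$ when $k\neq 1$, guarantees this. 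The Cauchy–Kowalewski theorem then yields a unique analytic $G$ on a neighbourhood ${\cal U}=I\times(-\epsilon,\epsilon)$ solving \eqref{equ} with these initial conditions, and one must check $G$ is non-holomorphic and $|G|<1$ near $v=0$ (the latter from $|G(u,0)|<1$ by continuity, the former since $G_{\overline\zeta}(u,0)\not\equiv 0$ unless $\beta'\equiv 0$).

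Next I would feed this $G$ into \eqref{rw1} (case $k=1$) or \eqref{rw2} (case $k\neq 1$). By Lemma \ref{lema2} the integrability conditions hold, so these formulas produce a well-defined conformal surface $\widetilde\psi$ in $\mathcal{S}_k$ whose euclidean Gauss map is $G$; this handles existence once I check $\widetilde\psi$ matches the Björling data. The verification that $\widetilde\psi|_{v=0}=\beta$ and that the unit normal along $\beta$ is $V$ is the computational heart: one differentiates \eqref{rw1}/\eqref{rw2}, expresses $\widetilde\psi_\zeta$ in terms of the $f,g,h$ of \eqref{coordiso} (equivalently in terms of $F=2kw\,\overline G_\zeta/(1-|G|^4)$ and $G$), and recognizes that along $v=0$ the identity $\widetilde\psi_u(u,0)=\beta'(u)$ is forced by the defining relation $\phi=\frac12(\beta'-i\beta'\wedge V)$ together with the prescribed values of $G$ and $G_{\overline\zeta}$ on $v=0$. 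Fixing the constant of integration (the "appropriate translation") so that $\widetilde\psi(u_0,0)=\beta(u_0)$ then gives $\widetilde\psi(u,0)=\beta(u)$ on all of $I$. That the normal is $V$ follows because $N=\left(\tfrac{2G}{1+|G|^2},\tfrac{1-|G|^2}{1+|G|^2}\right)$ and $G(u,0)=\Pi\circ V(u)$ by the first line of the data, so $N|_{v=0}=V$.

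For uniqueness I would argue in reverse: if $\psi\in\mathcal{S}_k$ solves the Cauchy problem, then by Section 2 its normal Gauss map $G$ satisfies \eqref{equ}, and the conformality together with the initial conditions $\psi|_{v=0}=\beta$, $N|_{v=0}=V$ pin down $G(u,0)$ and $G_{\overline\zeta}(u,0)$ to be exactly the expressions in \eqref{ckp} — here one uses that $\phi=\psi_\zeta$ restricted to $v=0$ equals $\frac12(\beta'-i\beta'\wedge V)$ (the standard identity $\psi_u\wedge N = -\psi_v$ for conformal immersions, used at $v=0$), together with the formulas \eqref{coordiso} relating $\psi_\zeta$ to $F$ and $G$. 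By the uniqueness clause of Cauchy–Kowalewski, $G$ is the solution of \eqref{ckp}, and then the converse directions of Theorems \ref{W1}/\ref{Wk} show $\psi$ coincides with the surface built above, up to the translation normalization. I expect the main obstacle to be the careful bookkeeping in the verification that the Weierstrass data reproduce $\beta$ and $V$ along $v=0$ — in particular untangling the normalizations of $F$ (the factor $2k$, the power $w^{(k-1)/k}$ hidden in \eqref{rw2}) and confirming that the two boldly different-looking expressions for $G(u,0)$ in \eqref{ckp} are consistent, which amounts to the algebraic identity $\phi_1^2+\phi_2^2+\phi_3^2=0$ coming from $|V|=1$ and $\langle\beta',V\rangle=0$.
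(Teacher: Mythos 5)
Your overall route is the same as the paper's: reduce the problem to the Cauchy--Kowalewski system \eqref{ckp} for the normal Gauss map, construct the surface through the Weierstrass formulas \eqref{rw1}--\eqref{rw2}, and obtain uniqueness from the uniqueness of the analytic solution $G$ together with the analyticity of surfaces in ${\cal S}_k$. The $k=1$ branch is correct as you describe it: with the prescribed values of $G(u,0)$ and $G_{\overline{\zeta}}(u,0)$ one computes $\widetilde{\psi}_{\zeta}(u,0)=\phi(u)$, hence $\widetilde{\psi}_u(u,0)=\beta'(u)$, and an honest translation finishes the matching; your remark that the two expressions for $G(u,0)$ agree because $\phi_1^2+\phi_2^2+\phi_3^2=0$ is also the right observation.

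The gap is in the existence verification for $k\neq 1$, where you assert that $\widetilde{\psi}_u(u,0)=\beta'(u)$ is ``forced'' and that fixing a constant of integration (a translation) recovers $\beta$. This fails: in \eqref{rw2} the third coordinate is $\frac{2k}{k-1}\Gamma$ with $\Gamma$ an exponential normalized by $\Gamma(\zeta_0)=1$, so it carries no additive integration constant, and with the data \eqref{ckp} one only obtains $\widetilde{\psi}_{\zeta}(u,0)=\frac{\widetilde{\psi}_3(u,0)}{\beta_3(u)}\,\phi(u)$. Integrating the resulting relation $\partial_u\log\widetilde{\psi}_3(u,0)=\partial_u\log\beta_3(u)$ gives $\widetilde{\psi}_3(u,0)=c\,\beta_3(u)$ with a constant $c$ that is in general different from $1$ (at the base point $\widetilde{\psi}_3=\frac{2k}{k-1}$, which need not equal $\beta_3(u_0)$), so the constructed surface matches the Bj\"orling data only up to a homothety. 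A vertical translation cannot repair this, because vertical translations do not preserve ${\cal S}_k$ when $k\neq1$ (the weight $\varphi_k$ depends on $z$). The paper closes this step by using that dilations fixing the plane $z=0$ do preserve the class of $\frac{2}{k-1}$-singular minimal surfaces: one chooses $c_0$ with $c_0\widetilde{\psi}_3(u,0)=\beta_3(u)$, so that $(c_0\widetilde{\psi})_{\zeta}(u,0)=\phi(u)$, and the solution is $c_0\widetilde{\psi}\circ{\cal T}$ for a suitable horizontal translation ${\cal T}$ (equivalently, one absorbs the dilation into the additive constant of the harmonic integral defining $\Gamma$). Without this homothety argument your existence proof for $k\neq1$ is incomplete; the remainder of your proposal, including the uniqueness argument, agrees with the paper.
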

\begin{proof}
First we will prove the uniqueness part and how the solution can be written.  Let $\beta(u)$ and $V(u)$ be the Bj\"orling data defined on a real interval $I$ and consider $\widetilde{\psi}$ a minimal surface respect to the weighted area functional ${\cal A}_{\varphi_k}$ solving the Cauchy problem for these data. Then, we can parametrize  this surface conformally on a neighbourhood of $\beta$ as $\widetilde{\psi}:{\cal U}=I\times]-\epsilon,\epsilon[\subseteq\mathbb{C}\longrightarrow \mathbb{R}^3$ so that  
 \begin{itemize}
 \item[(i)] $\widetilde{\psi}(u,0)=\beta(u)$ for all  $u\in I$, being $\zeta= u + i v$.
 \item[(ii)]  The unit normal of $\widetilde{\psi}$ along $\beta(u)$ is $V(u)$.
 \end{itemize} 
As $\widetilde{\psi}$ is a conformal immersion and $|\Pi\circ V|<1$, we observe that 
\begin{align}
&\phi(u)=\widetilde{\psi}_{\zeta}(u,0)=\frac{1}{2}(\beta'(u)-i\beta'(u)\wedge V(u)),\label{inicond}\\
&|G(u,0)|^2= \left|\frac{\phi_3(u)}{\phi_1(u) - i \phi_2(u)}\right|^2= \left|\frac{\phi_1(u) + i \phi_2(u)}{\phi_3(u)}\right|^2=\left|\frac{1-V_3}{1+V_3}\right|^2<1\label{wp}
\end{align}
where $G$ is  the euclidean Gauss map of $\widetilde{\psi}$. Thus, from  \eqref{con2}, \eqref{complexfunctions}, \eqref{sistema3}, \eqref{equ}, \eqref{inicond} and \eqref{wp} the system \eqref{ckp} is well posed and $G$ must be its unique  solution. Now, by using our Weierstrass representation, $\widetilde{\psi}$ can be recovered either as in \eqref{rw1} if $k=1$ or as in \eqref{rw2} if $k\neq 1$. To sum up, we have proved that the euclidean Gauss map $G$ of $\widetilde{\psi}$  is completely determined in a neighbourhood of $\beta$ by the Bj\"orling data $\beta$, $V$ and the solution we started with can be expressed in terms of $G$. Thus, the analyticity of surfaces in the class ${\cal S}_{k}$ gives  the uniqueness.

\

To prove the existence we start with the Bj\"orling data $\beta$, $V$ and take $\phi$ as in \eqref{phi}. Then \eqref{ckp} is a well-defined   system of Cauchy-Kowalewski's type and there exists a unique solution $G$ well defined in an open subset ${\cal U}\subseteq \mathbb{C}$ containing $I$.  Now we we distinguish two cases:
\begin{itemize}
\item If $k=1$, we may consider $\widetilde{\psi}:{\cal U}\rightarrow \mathbb{R}^3$ as in \eqref{rw1} which verifies, 
\begin{align*}
&<\widetilde{\psi}_{\zeta},\widetilde{\psi}_\zeta> =0,\\
& |\widetilde{\psi}_\zeta|^2 = 8|\Upsilon|^2 (1 + |G|^2)^2, \quad \text{with } \Upsilon= \frac{G_{\overline{\zeta}}}{{1-\vert G\vert^{4}}},\\
& \widetilde{\psi}_{\zeta}(u,0) = \phi(u) = \beta'(u) - i \, \beta'(u)\wedge V(u).
\end{align*}
Then,  from Theorem \ref{W1} and as $|\widetilde{\psi}_{\zeta}(u,0)|^2= 2|\beta'(u)|^2>0$, we have that up to an appropriate translation ${\cal U}$ can be choose  so that  $\widetilde{\psi}$  is a translating soliton solving  the Cauchy problem.
\item If $k\neq 0, 1$, then we consider $ \widetilde{\psi}_{\zeta}:{\cal U} \rightarrow \mathbb{R}^3$  as in \eqref{rw2}. In this case and by a straightforward computation we have
\begin{align*}
&<\widetilde{\psi}_{\zeta},\widetilde{\psi}_\zeta> =0,\\
& |\widetilde{\psi}_\zeta|^2 =8 k^2\, \Gamma^2 |\Upsilon|^2 (1 + |G|^2)^2, \quad \text{with } \Upsilon= \frac{G_{\overline{\zeta}}}{{1-\vert G\vert^{4}}},\\
& \widetilde{\psi}_{\zeta}(u,0) = \frac{\widetilde{\psi}_3(u,0)}{\beta_3(u)}\phi(u) = \frac{\widetilde{\psi}_3(u,0)}{\beta_3(u)}(\beta'(u) - i \, \beta'(u)\wedge V(u)).
\end{align*}
Hence, there exists a positive constant $c_0>0$, such that $c_0 \widetilde{\psi}_3(u,0)=\beta_3(u)$ and   $ |\widetilde{\psi}_{\zeta}(u,0)|^2=2c_0^2|\beta'(u)|^2>0 $. Thus,  from  Theorem \ref{Wk}, ${\cal U}$ can be choose so that, the solution to the Cauchy problem in the class ${\cal S}_k$ is  $c_0 \widetilde{\psi}\circ {\cal T}$ for an appropriate horizontal translation ${\cal T}$.
\end{itemize}
\end{proof}
\begin{remark}
In the  case $k=0$ which  corresponds to the class of minimal surfaces in the hyperbolic space, the solution to the Cauchy problem can be obtained by using the Weierstrass representation given in {\rm \cite{K}} and following a similar reasoning as in Theorem \ref{cp}.
\end{remark}
Arguing as in \cite[Theorem 12, Corollary 13]{GM} we also can prove the following geometric consequences of Theorem \ref{cp}: 
\begin{corollary}{{\rm (Generalized Symmetry principle)}} Let $\Phi$ be a positive rigid motion in $\mathbb{R}^3$ that leaves invariant the class of surfaces ${\cal S}_k$. If $\Phi$ is a symmetry in the Bj\"orling data of the Cauchy problem for ${\cal S}_k$, then $\Phi$ induces a symmetry in the resulting surface. 
\end{corollary}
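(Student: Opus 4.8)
The plan is to reduce the statement to the uniqueness part of Theorem \ref{cp} through an equivariance argument, in the spirit of \cite[Theorem 12, Corollary 13]{GM}.

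\emph{Step 1: the relevant isometries.} First I would make explicit which positive rigid motions $\Phi$ of $\mathbb{R}^3$ leave ${\cal S}_k$ invariant. Since a surface belongs to ${\cal S}_k$ exactly when it is minimal for the Ilmanen metric ${\rm I}_{\varphi_k}=\mathrm{e}^{\varphi_k}\langle\cdot,\cdot\rangle$ and $\varphi_k$ depends only on the third coordinate, such a $\Phi$ must preserve $\varphi_k$ up to an additive constant; hence for $k\neq1$ it is a composition of a horizontal translation and a rotation about a vertical axis, and for $k=1$ one may in addition allow vertical translations. In each of these cases $\Phi$ is an isometry of $(\Omega_{\varphi_k},{\rm I}_{\varphi_k})$, so $\Phi\circ\widetilde\psi$ is again a conformal minimal immersion in ${\cal S}_k$ whenever $\widetilde\psi$ is, with the same conformal parameter; moreover the linear part of $\Phi$ acts on the unit normal, hence on the normal Gauss map, as $G\mapsto\mathrm{e}^{i\theta}G$, and equation \eqref{equ} (which only involves $|G|$, $G$, $\overline{G}G_\zeta$ and $|G_{\overline{\zeta}}|^{2}$) is manifestly invariant under this action, as are the Weierstrass data of \eqref{rw1}--\eqref{rw2} up to $\Phi$ itself.

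\emph{Step 2: transforming the Cauchy problem.} Let $(\beta,V)$ be a pair of Bj\"orling data and let $\widetilde\psi:{\cal U}=I\times\,]-\epsilon,\epsilon[\,\to\mathbb{R}^3$ be the solution given by Theorem \ref{cp}. The hypothesis that $\Phi$ is a symmetry of the data means that there is a real-analytic reparametrization $\sigma:I\to I$ with $\Phi\circ\beta=\beta\circ\sigma$ and $d\Phi\circ V=V\circ\sigma$ (with matching induced orientations). I would extend $\sigma$ to a biholomorphism $\widehat\sigma$ of a neighbourhood of $I$ in $\mathbb{C}$ preserving the real axis, and put $\widetilde\psi^{*}:=\Phi\circ\widetilde\psi\circ\widehat\sigma^{-1}$. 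Then $\widetilde\psi^{*}$ is a conformal surface in ${\cal S}_k$, and along $v=0$ one has $\widetilde\psi^{*}(u,0)=\Phi(\beta(\sigma^{-1}(u)))=\beta(u)$ with unit normal $d\Phi(V(\sigma^{-1}(u)))=V(u)$; that is, $\widetilde\psi^{*}$ solves the same Cauchy problem as $\widetilde\psi$. By the uniqueness in Theorem \ref{cp}, $\widetilde\psi^{*}\equiv\widetilde\psi$ near $I$, i.e. $\Phi\circ\widetilde\psi=\widetilde\psi\circ\widehat\sigma$ there, which says precisely that $\Phi$ maps the resulting surface onto itself. When the data are symmetric only through an orientation-reversing reparametrization (e.g. a vertical mirror plane), the same argument applies with $\widehat\sigma$ anti-holomorphic, using that such a $\Phi$ still preserves ${\cal S}_k$ and reverses the conformal orientation.

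\emph{Main obstacle.} The delicate points are essentially bookkeeping: (i) extending the real-analytic $\sigma$ to a (anti)holomorphic self-map of a complex neighbourhood of $I$ fixing $\{v=0\}$ — possible by analyticity, after shrinking; (ii) checking that the normalizing translation in \eqref{rw1}--\eqref{rw2} (and, for $k\neq1$, the positive scaling constant $c_{0}$ from the proof of Theorem \ref{cp}) is compatible with $\Phi$ — automatic, since it is pinned down by $\widetilde\psi(u,0)=\beta(u)$, which $\Phi$ respects by hypothesis; and (iii) tracking the interaction between the orientation of $\mathbb{R}^3$ and that of $\Sigma$ so that ``symmetry of the data'' produces the identical Cauchy problem rather than the one with $(\beta,-V)$ or a reversed curve. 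Once these are settled, the conclusion follows at once from the uniqueness statement of Theorem \ref{cp}.
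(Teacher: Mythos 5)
Your proposal follows essentially the same route as the paper, which proves this corollary only by invoking the Gálvez--Mira scheme \cite{GM}: apply $\Phi$ (with a real-analytic reparametrization extended conformally off the real axis) to the solution, observe that the transformed immersion solves the same Cauchy problem for the same Bj\"orling data, and conclude by the uniqueness part of Theorem \ref{cp}. The bookkeeping issues you flag (extending $\sigma$, the normalizing translation/constant, orientation conventions) are exactly the points handled in \cite{GM}, so nothing essential is missing.
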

\begin{corollary}{{\rm(Periodic Bj\"orling data)}} When the the Bj\"orling data $\beta(u)$ and $V(u)$ are $T$-periodic, the surface in ${\cal S}_k$ solving the Cauchy problem has the topology of a cylinder near $\beta$ and any surface in ${\cal S}_k$ with the topology of a cylinder is recovered in this way.
\end{corollary}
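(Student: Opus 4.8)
The plan is to derive both assertions of the corollary from the uniqueness part of Theorem~\ref{cp}, combined with the invariance of the whole construction under the real translation $\zeta\mapsto\zeta+T$, following the pattern of \cite[Theorem 12, Corollary 13]{GM}.

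Assume first that $\beta$ and $V$ are $T$-periodic. The datum $\phi$ in \eqref{phi} is built only from $\beta'$ and $V$, hence it is $T$-periodic, and so are the initial conditions $G(u,0)$ and $G_{\overline{\zeta}}(u,0)$ prescribed in \eqref{ckp}. I would use the uniqueness in Theorem~\ref{cp} as follows: the reparametrised map $\widetilde{\psi}(\,\cdot+T)$ has, along $v=0$, trace $\beta(u+T)=\beta(u)$ and unit normal $V(u+T)=V(u)$, so it solves the Cauchy problem in ${\cal S}_k$ with exactly the same Bj\"orling data $(\beta,V)$; by uniqueness, $\widetilde{\psi}(\zeta+T)=\widetilde{\psi}(\zeta)$ on the common domain. (Equivalently, one can note that the PDE in \eqref{ckp} is translation invariant in $\zeta$, so $G(\zeta+T)=G(\zeta)$ by uniqueness of the Cauchy--Kowalewski solution, and then the integrands in \eqref{rw1}--\eqref{rw2} are $T$-periodic, forcing $\widetilde{\psi}(\zeta+T)-\widetilde{\psi}(\zeta)$ to be a constant vector which vanishes because $\beta$ closes up.) Consequently $\widetilde{\psi}$ factors through the quotient ${\cal U}/(\zeta\sim\zeta+T)$; shrinking $\epsilon$ so that it is independent of $u$, which is possible because $\beta(\mathbb{R})$ is compact and the Cauchy--Kowalewski solution depends analytically on the data, this quotient is a cylinder $\mathbb{S}^1\times\,]-\epsilon,\epsilon[$ onto which $\widetilde{\psi}$ descends as an immersion. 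Hence the solution surface has the topology of a cylinder near $\beta$.

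For the converse, let $S\in{\cal S}_k$ be topologically a cylinder and choose an embedded non-contractible closed curve $\beta$ in $S$, parametrised on $\mathbb{R}$ with period $T$, together with the unit normal $V$ of $S$ along $\beta$. Then $V$ is automatically $T$-periodic and, since $\beta$ lies on $S$, one has $\langle\beta',V\rangle=0$, $|\Pi\circ V|<1$ and $\beta_3>0$ when $k\neq1$, so $(\beta,V)$ is a pair of Bj\"orling data. Because $S$ belongs to ${\cal S}_k$, contains $\beta$ and has normal $V$ along it, the uniqueness in Theorem~\ref{cp} identifies a neighbourhood of $\beta$ in $S$ with the solution of the Cauchy problem for $(\beta,V)$, and by the first part that solution is reconstructed as stated; thus $S$ is recovered near $\beta$ in this way.

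The argument is essentially formal, so I do not expect a deep obstacle. The two points needing care are: (i) that the local solution strip provided by Theorem~\ref{cp} can be taken genuinely periodic, i.e. with $\epsilon$ uniform in $u$, which follows from compactness of $\beta(\mathbb{R})$ and the analytic dependence in the Cauchy--Kowalewski theorem; and (ii) in the converse, that the cylinder statement concerns the immersed surface near $\beta$ and not embeddedness, so possible self-intersections of $S$ away from $\beta$ play no role. Neither is serious.
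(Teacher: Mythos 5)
Your argument is correct and is essentially the paper's own: the paper gives no independent proof but simply says to argue as in \cite[Theorem 12, Corollary 13]{GM}, and that argument is exactly the one you reconstruct — $T$-periodicity of the data in \eqref{phi} and \eqref{ckp} plus uniqueness in Theorem \ref{cp} (equivalently, translation invariance of \eqref{ckp} and of the integrands in \eqref{rw1}--\eqref{rw2}) forces $\widetilde{\psi}(\zeta+T)=\widetilde{\psi}(\zeta)$, so the solution descends to a cylinder, while the converse follows by feeding a closed non-contractible curve on the surface and its unit normal back into Theorem \ref{cp} as Bj\"orling data.
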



\begin{thebibliography}{10}

\bibitem{BHT} B\"ome, R; Hildebrant, S.; Tausch, E.: \textit{The two-dimensional analogue of the catenary}.\ Pacific J. Math. \textbf{88} No. 2\ (1980)   247-278.

\bibitem{CSS} Clutterbuck, J.; Schn\"ure, O.; Schulze, F. : \textit{Stability of translating solutions to mean curvature flow}.\ Calc.Var., \textbf{29}\ (2007)   281-293.

\bibitem{D} Dierkes, U.: \textit{Singular Minimal Surfaces} S. Hildebrandt et al. (eds.), Geometric Analysis and Nonlinear Partial Differential Equations, (2003) 177-193. 

\bibitem{DH} Dierkes, U; Huisken, G.: \textit{ The N-dimensional analogue of the catenary: Prescribed area.} J. Jost (ed) Calculus ofVariations and Geometric Analysis. International Press, (1996) 1-13. 

\bibitem{Du} Dunn, W. \textit{ The principles of dome construction: I and II.} J. Royal Institute of British
Architects, 23, (1908) 401- 412. 

\bibitem{GM} G\'alvez J.A.; Mira, P. \textit{ The Cauchy problem for the Liuville equation and Bryant surfaces} Advances in Mathematics, 195, (2005) 456- 490. 

\bibitem{HMW} Hoffman, D.; Mart\'in, F.; White, B.: \textit{Scherk-like Translators for Mean Curvature Flow}. Journal of Differential Geometry, to appear. https://arxiv.org/abs/1903.04617. 

\bibitem{HIMW} Hoffman, D.; Ilmanen, T.; Mart\'in, F.; White, B.: \textit{Graphical translators for mean curvature flow}. Calculus of Variations and PDE's \textbf{58} (2019), art. 117.

\bibitem{HIMW2} Hoffman, D.; Ilmanen, T.; Mart\'in, F.; White, B.: \textit{Notes on Translating Solitons of the Mean Curvature Flow}. To appear in the proceedings of the M:IV conference, Springer. (https://arxiv.org/abs/1901.09101)

\bibitem{Il} Ilmanen, T. \textit{Elliptic regularization and partial regularity for motion by mean curvature}, Mem. Amer. Math. Soc. 108 (1994), no. 520. 

\bibitem{K} Kokubo, M. \textit{Weierstrass representation for minimal surfaces in hyperbolic space}, T\^{o}hoku Math. J. 49 (1997), 367-377.
 
\bibitem{Rafa1} L\'opez,  R.: \textit{Invariant singular minimal surfaces}, Ann. Glob. Anal. Geom., \textbf{53} (2018), 521-541.

 \bibitem{Rafa2} L\'opez,  R.: \textit{The Dirichlet problem for the ?-singular minimal surface equation}, Arch. Math.
(Basel) 112 (2019), 213-222.

\bibitem{Rafa3} L\'opez,  R.: \textit{Uniqueness of critical points and maximum principles of the singular minimal
surface equation}, J. Differential Equations, \textbf{266} (2019), 3927?3941.

\bibitem{Rafa4} L\'opez,  R.: \textit{Compact singular minimal surfaces with boundary}, Amer. J. Math., \textbf{142} (2020), 521-541.

\bibitem{MSHS1} Mart\'in, F.;  P\'erez-Garc\'ia,  J.; Savas-Halilaj, A.; Smoczyk, K.: \textit{A characterization of the grim reaper cylinder}, Journal fur die reine und angewandte Mathematik  \textbf{2019}, 746 (2016), 209-234.

\bibitem{MSHS2} Mart\'in, F.; Savas-Halilaj, A.; Smoczyk, K.: \textit{ On the topology of translating solitons of the mean curvature flow}, Cal. Var., \textbf{54} (2015) 2853-2882.

\bibitem{MM} Mart\'inez,  A.;  Mart\'inez Trivi\~no,  A. L.:\textit{A Calabi's Type Correspondence} Nonlinear Analysis, \textbf{191} (2020), https://doi.org/10.1016/j.na.2019.111637.

\bibitem{N} Nitsche, J.C.C.: \textit{ A nonexistence theorem for the two-dimensional analogue of the catenary.}, Analysis, \textbf{6} (1986) 143-156.

\bibitem{O}  Otto, F: Zugbeanspruchte Konstruktionen. Bd. I, II. Berlin, FrankfurtlM., Wien: Ullstein 1962, 1966

\bibitem{Pe} Petrovsky, I.G.: \textit{Lectures on Partial Differential Equations} Interscience Publishers. New York, (1954).

\bibitem{SX} Spruck, J.;  Xiao, L.:\textit{ Complete translating solitons to the mean curvature flow in $\mathbb{R}^{3}.$} Amer. J. Math., 142, no. 3, (2020) 993?1015.

\bibitem{Ve} Vekua, I.N.:\textit{ Generalized analytic functions.} Internation series of Monographs on pure and Applied Math., vol 25,  Pergamon Press (1962).

\bibitem{W} Wang, X. J.:\textit{ Convex solutions to the mean curvature flow.}  Annals of Mathematics , \textbf{173}, (2011) 1185-1239.
\end{thebibliography}
\end{document}